\documentclass[11pt,twoside]{article}
\usepackage{theorem}
\usepackage{epsfig,amsmath,amssymb}
\usepackage{color}
\usepackage{lineno}
\allowdisplaybreaks[1]

\newtheorem{thm}{\bf Theorem}[section]
\newtheorem{cor}[thm]{\bf Corollary}
\newtheorem{lem}[thm]{\bf Lemma}

\newtheorem{defn}[thm]{\bf Definition}
\theoremstyle{remark}

\numberwithin{equation}{section}
\newtheorem{exam}[thm]{\bf Example}

\title{\bf Stability of a Fr\'{e}chet-type functional equation}

\author{Sun-Sook Jin, Soon-Mo Jung and Yang-Hi Lee}
\date{\footnotesize\it
      Department of Mathematics Education,
      Gongju National University of Education,
      Gongju 32553, Republic of Korea \\
      \textit{E-mail}: {\tt ssjin@gjue.ac.kr} \\
      Mathematics Section,
      College of Science and Technology,
      Hongik University,
      30016 Sejong, \\
      Republic of Korea \\
      \textit{E-mail}: {\tt smjung747@gmail.com} \\
      Department of Mathematics Education,
      Gongju National University of Education,
      Gongju 32553, Republic of Korea \\
      \textit{E-mail}: {\tt lyhmzi@gjue.ac.kr}}

\begin{document}
\maketitle
\markboth{}{}

\small
\vspace{1cm}

\noindent
{\bf Abstract.}
We establish the generalized Hyers-Ulam stability of the
Fr\'{e}chet-type functional equation
\begin{align*}
\sum_{\emptyset \neq I \subset \{ 1, 2, \ldots, n+1 \}}
(-1)^{|I|-n+1} f \Bigg( \sum_{i \in I} x_i \Bigg) = 0
\end{align*}
in the sense of G\u{a}vru\c{t}a.
\vspace*{5mm}

\noindent
{\bf AMS Subject Classification:} 39A30; 39B82; 39B52.
\vspace*{5mm}

\noindent
{\bf Key Words:}
stability of a Fr\'{e}chet-type functional equation.
\normalsize

\section{Introduction}

Throughout this paper, let $V$ and $W$ be real vector spaces,
$X$ a real normed space, $Y$ a real Banach space, and let $n$
be a fixed positive integer.
For a given mapping $f : V \to W$, we define the
Fr\'{e}chet-type difference operator
$E_n f(x_1, x_2, \ldots, x_{n+1})$ by
\begin{align*}
E_n f(x_1, x_2, \ldots, x_{n+1})
:= \sum_{\emptyset \neq I \subset \{ 1, 2, \ldots, n+1 \}}
   (-1)^{|I|-n+1} f \Bigg( \sum_{i \in I} x_i \Bigg)
\end{align*}
for all $x_1, x_2, \ldots, x_{n+1} \in V$.

The stability theory of functional equations originated from
the pioneering work of Hyers \cite{Hy} concerning the Ulam's
problem \cite{Ul} for the additive functional equation, which
can be expressed in terms of the operator $E_1$ as
\begin{align*}
E_1 f(x_1, x_2) = 0
\end{align*}
for all $x_1, x_2 \in V$.
This foundational result was subsequently extended by Rassias
\cite{ThR} and G\u{a}vru\c{t}a \cite{Gav}.
Since then, their contributions have established what are now
standardly termed \emph{Hyers-Ulam-Rassias stability} and
\emph{generalized Hyers-Ulam stability}, respectively, in the
literature.

In particular, several researchers
\cite{Changqa,Fechnerqa,Jinqa,Jungqa,Kangqa,Kannappanqa,Kimqa,Leeqa}
have investigated the stability of the additive-quadratic
functional equation, which corresponds to the case where
\begin{align*}
E_2 f(x_1, x_2, x_3) = 0
\end{align*}
for all $x_1, x_2, x_3 \in V$.
Moreover, Jin and Lee \cite{Jinqcqa} extended this line of
research by investigating the stability of the
additive-quadratic-cubic-quartic functional equation
\begin{align*}
E_4 f(x_1, x_2, x_3, x_4, x_5) = 0
\end{align*}
for all $x_1, x_2, x_3, x_4, x_5 \in V$.

In this paper, we establish the generalized Hyers-Ulam
stability of the Fr\'{e}chet-type functional equation
\begin{align} \label{i:1}
E_n f(x_1, x_2, \ldots, x_{n+1}) = 0
\end{align}
in the sense of G\u{a}vru\c{t}a.
As direct corollaries of our main results, we also obtain its
Hyers-Ulam-Rassias stability and hyperstability.

\section[Stability of Fr\'{e}chet-type functional equation]
        {Stability of Fr\'{e}chet-type functional equation \\
         $E_n f(x_1, x_2, \ldots, x_{n+1}) = 0$}

By choosing $a \geq 2$ and setting
$A = \{ a, a^2, \ldots, a^n \}$, we obtain the following
specialized cases of \cite[Definition 2.1]{JinChar}.


\begin{defn}[\cite{JinChar}] \label{Enan:def1}
For each $k \in \{ 1, 2, \ldots, n \}$, let $\omega_{k,a,n}$
be the real number defined by
\begin{align*}
\omega_{k,a,n}
:= \prod\limits_{\substack{1 \leq i \leq n \\ i \neq k}}
   \frac{1}{a^k - a^i}.
\end{align*}
For a given mapping $f : V \to W$, we define the component
mappings $f_{k,a,n}, \Gamma_{a,n} f : V \to W$ by
\begin{align*}
\begin{array}{c}
{\displaystyle
f_{k,a,n}(x)
:= \omega_{k,a,n}
   \sum_{I \subset \{ 1, 2, \ldots, n \} \setminus \{ k \}}
   \Bigg( (-1)^{|I|} \prod_{i \in I} a^i \Bigg)
   f \big( a^{n-1-|I|} x \big)}, \vspace*{2mm}\\
{\displaystyle
\Gamma_{a,n} f(x)
:= \sum_{I \subset \{ 1, 2, \ldots, n \}}
   \Bigg( (-1)^{|I|} \prod_{i \in I} a^i \Bigg)
   f \big( a^{n-|I|} x \big)}
\end{array}
\end{align*}
for all $x \in V$ and $k \in \{ 1, 2, \ldots, n \}$.
\end{defn}

The following lemma generalizes \cite[Theorem 11]{JinGp8} from
$a = 2$ to $a \geq 2$.
Since the proof is identical to that of
\cite[Theorem 11]{JinGp8}, it is omitted.


\begin{lem}[\cite{JinGp8}] \label{Enan:lem2}
The following identity holds for all
$i \in \{ 0, 1, \ldots, n-2 \}$:
\begin{align} \label{Enan:1}
\sum_{k=1}^n a^{ki} \omega_{k,a,n} = 0.
\end{align}
Furthermore, for any fixed $\ell \in \{ 1, 2, \ldots, n \}$
and all integers $i \ge 0$, the following inequalities hold:
\begin{align} \label{Enan:2}
\Bigg| \sum_{k=\ell}^n \frac{\omega_{k,a,n}}{a^{k(i+1)}} \Bigg|
\leq \bigg| \frac{\omega_{\ell,a,n}}{a^{\ell(i+1)}} \bigg|
\end{align}
and
\begin{align} \label{Enan:3}
\Bigg| \sum_{k=1}^\ell a^{ki} \omega_{k,a,n} \Bigg|
\leq a^{\ell i} | \omega_{\ell,a,n} |.
\end{align}
\end{lem}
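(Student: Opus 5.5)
We treat the numbers $\omega_{k,a,n}$ as the coefficients of a Lagrange interpolation (partial fraction) formula at the nodes $t_k = a^k$, $k = 1, \ldots, n$. The nodes are distinct and, since $a \ge 2$, strictly increasing and positive.

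\textbf{Identity (\ref{Enan:1}).} Fix $i \in \{0, 1, \ldots, n-2\}$ and write $p(t) = t^i$. Lagrange interpolation of $p$ at the $n$ nodes is exact, because $\deg p \le n-1$. Comparing the coefficients of $t^{n-1}$ on both sides gives
\begin{align*}
\sum_{k=1}^n \frac{p(a^k)}{\prod_{j \ne k} (a^k - a^j)}
= \sum_{k=1}^n a^{ki} \omega_{k,a,n},
\end{align*}
and this equals the leading coefficient of $t^{n-1}$ in $p$. That coefficient is $0$ because $i < n-1$. Equivalently, the left-hand side is the divided difference $p[a, a^2, \ldots, a^n]$, which vanishes for polynomials of degree less than $n-1$.

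\textbf{Sign structure.} For both inequalities we first record the sign and size pattern of the $\omega_{k,a,n}$. The factors $a^k - a^j$ with $j > k$ are negative, and there are $n-k$ of them, so $\operatorname{sign}\omega_{k,a,n} = (-1)^{n-k}$. Thus the signs alternate in $k$. The plan is to show that the relevant terms alternate in sign and decrease in absolute value, moving away from the index $\ell$. The alternating series estimate then bounds each sum by its first term.

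\textbf{Inequality (\ref{Enan:2}).} We compare consecutive terms and compute
\begin{align*}
\frac{|\omega_{k+1,a,n}|\, a^{-(k+1)(i+1)}}{|\omega_{k,a,n}|\, a^{-k(i+1)}}
= a^{-(i+1)} \prod_{j \ne k, k+1}
  \frac{|a^k - a^j|}{|a^{k+1} - a^j|}
  \cdot \frac{|a^k - a^{k+1}|}{|a^{k+1} - a^k|}.
\end{align*}
The last factor equals $1$. For $j > k+1$ we have $|a^k - a^j| \le |a^{k+1} - a^j|$. For $j < k$ we have $(a^k - a^j)/(a^{k+1} - a^j) \le 1/a \cdot$ something $\le 1$. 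Hence every factor is at most $1$, and together with $a^{-(i+1)} < 1$ the ratio is $< 1$. The terms therefore alternate in sign and strictly decrease in modulus for $k = \ell, \ldots, n$, and the tail sum is bounded by $|\omega_{\ell,a,n}|/a^{\ell(i+1)}$.

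\textbf{Inequality (\ref{Enan:3}).} We run the same argument in the reverse direction, looking at the ratio of term $k-1$ to term $k$:
\begin{align*}
a^{-i} \prod_{j \ne k-1, k} \frac{|a^k - a^j|}{|a^{k-1} - a^j|}.
\end{align*}
For $j > k$ each factor is $\ge 1$, so the crude bound goes the wrong way, and this is the main obstacle. To handle it, write $j = k + m$ with $m \ge 1$. Each factor equals $(a^m - 1)/(a^{m+1} - 1) \cdot a$, which is $\ge 1$. For $j < k-1$ the factors are $(a^k - a^j)/(a^{k-1} - a^j) \approx a$. So the naive termwise ratio can exceed $1$, and termwise monotonicity may fail when $i$ is small relative to $n$.

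The fallback is to use (\ref{Enan:1}): for $i \le n-2$ the full sum vanishes, so
\begin{align*}
\sum_{k=1}^{\ell} a^{ki} \omega_{k,a,n} = -\sum_{k=\ell+1}^{n} a^{ki} \omega_{k,a,n}.
\end{align*}
One then bounds whichever side is alternating-decreasing. Alternatively, one can use the divided difference (Peano kernel) representation, which gives a positivity or sign-definiteness argument for the partial sums directly. For $i \ge n-1$ one grows by $a^{i}$ per step and expects monotonicity to hold.

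Checking precisely the ratio products, using $a \ge 2$ in the form $a^m - 1 \ge a^{m-1}$, is the step we expect to require the most care. Following \cite[Theorem 11]{JinGp8}, we would verify that each product is controlled by a telescoping product $\prod_m (1 - a^{-m})^{-1}$ whose value is compensated by the powers of $a$.
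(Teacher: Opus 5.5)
Your proof of (\ref{Enan:1}) via divided differences is fine, and so is the sign pattern $(-1)^{n-k}\omega_{k,a,n}>0$. The argument for (\ref{Enan:2}), however, rests on a false step. For $j>k+1$ you have $a^j-a^k>a^j-a^{k+1}$, so these factors are \emph{greater} than $1$. With $j=k+1+m$ the factor is $\frac{a^{m+1}-1}{a(a^m-1)}$, which is $3/2$ for $a=2$, $m=1$. A warning sign is that your reasoning never uses $a\ge 2$: for $1<a<2$, $k=1$, $i=0$, the ratio of consecutive terms tends to $1/(a-1)>1$ as $n\to\infty$. The conclusion survives, but only after telescoping. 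The product of the $j>k+1$ factors equals $\frac{a^{n-k}-1}{a^{n-k-1}(a-1)}<\frac{a}{a-1}$, and each $j<k$ factor is at most $1/a$. So with $T_k=\omega_{k,a,n}a^{-k(i+1)}$,
\[
\frac{|T_{k+1}|}{|T_k|}<a^{-(i+1)}\,a^{-(k-1)}\,\frac{a}{a-1}\le\frac{1}{a-1}\le 1 ,
\]
and it is exactly here that $a\ge 2$ is needed.

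For (\ref{Enan:3}) you again misjudge the factor sizes. For $j=k+m>k$ the factor $\frac{a^{m+1}-a}{a^{m+1}-1}$ is less than $1$; it is the $j<k-1$ factors that are large. More seriously, no proof is actually given. ``Bound whichever side is alternating-decreasing'' is precisely what has to be justified. You have not shown that for every $\ell$ one side is monotone in the right direction, nor that the tail's leading term is at most $a^{\ell i}|\omega_{\ell,a,n}|$; for $i\ge n-1$ you offer only an expectation.

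The missing idea is unimodality. With $S_k=a^{ki}\omega_{k,a,n}$, telescoping gives
\[
\frac{|S_k|}{|S_{k-1}|}=a^{i}\,\frac{a^{n-k+1}-1}{a^{n-k}\,(a^{k-1}-1)}\qquad(2\le k\le n),
\]
which is strictly decreasing in $k$. The case $\ell=1$ is trivial. If $|S_{\ell-1}|\le|S_\ell|$, the terms $S_1,\ldots,S_\ell$ alternate with nondecreasing moduli, and (\ref{Enan:3}) follows at once. Otherwise $|S_\ell|>|S_{\ell+1}|>\cdots>|S_n|$. This forces $i\le n-2$, because for $i\ge n-1$ each ratio exceeds $a^{n-k+1}-1\ge a-1\ge 1$. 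Then (\ref{Enan:1}) turns the partial sum into $-\sum_{k=\ell+1}^n S_k$, whose modulus is at most $|S_{\ell+1}|<|S_\ell|$. Without this case split, which again uses $a\ge 2$, your proposal does not establish (\ref{Enan:3}).
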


By setting $a_k = a^k$ in \cite[Corollary 6]{JungU}, we obtain
the following uniqueness lemma.


\begin{lem}[\cite{JungU}] \label{Enan:lem3}
Let $f : V \to Y$ be a given mapping and let
$\ell \in \{ 0, 1, \ldots, n \}$.
Assume that there exist a mapping $F : V \to Y$ and a function
$\phi : V \setminus \{ 0 \} \to [0, \infty)$ such that the
inequality
\begin{align} \label{Enan:4}
\| f(x) - F(x) \| \leq \left\{
\begin{array}{ll}
{\displaystyle
\sum_{i=0}^\infty \frac{\phi(a^i x)}{a^{i+1}} < \infty}
& (\mbox{for}~ \ell = 0), \vspace*{2mm}\\
{\displaystyle
\sum_{i=0}^\infty \frac{\phi(a^i x)}{a^{(\ell+1)(i+1)}} +
\sum_{i=0}^\infty
a^{\ell i} \phi \bigg( \frac{1}{a^i} x \bigg) < \infty}
& (\mbox{for}~ 0 < \ell < n), \vspace*{2mm}\\
{\displaystyle
\sum_{i=0}^\infty a^{ni} \phi \bigg( \frac{1}{a^i} x \bigg)
< \infty}
& (\mbox{for}~ \ell = n)
\end{array}
\right.
\end{align}
holds for all $x \in V \setminus \{ 0 \}$.
If $F$ admits a decomposition
$F(x) = \sum\limits_{k=1}^n F_k(x)$ where each $F_k : V \to Y$
satisfies $F_k(ax) = a^k F_k(x)$ for all $x \in V$, then $F$
is the unique mapping with this property satisfying
$(\ref{Enan:4})$.
\end{lem}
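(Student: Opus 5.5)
Suppose $F' = \sum_{k=1}^n F'_k$ is a second mapping with $F'_k(ax)=a^kF'_k(x)$ which also satisfies (\ref{Enan:4}). The plan is to show $F=F'$ by splitting the difference into its homogeneous components and killing each one with the appropriate rescaling. Put $G_k := F_k - F'_k$, so that $G_k(ax) = a^kG_k(x)$, and set $G := F - F' = \sum_{k=1}^n G_k$. By the triangle inequality, $\|G(x)\| \le 2\Phi(x)$, where $\Phi(x)$ denotes the right-hand side of (\ref{Enan:4}). Since $\Phi(x)$ is a tail-controllable convergent series, replacing $x$ by $a^m x$ or by $a^{-m}x$ and renormalizing gives bounds that tend to $0$. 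For instance,
\begin{align*}
\frac{\Phi(a^m x)}{a^{m(\ell+1)}} \le a^{\ell+1}\sum_{i\ge m}\frac{\phi(a^ix)}{a^{(\ell+1)(i+1)}} + \sum_{i=0}^{\infty} a^{\ell i - m(\ell+1)}\phi(a^{m-i}x),
\end{align*}
and the second piece must also be handled. This is where the argument needs care.

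The next step is to isolate the components by linear algebra. Since $G(a^jx) = \sum_k a^{jk}G_k(x)$, the Vandermonde structure with nodes $a^k$ (whose inverse is encoded by the $\omega_{k,a,n}$ and the component formula $f_{k,a,n}$ of Definition \ref{Enan:def1}) expresses each $G_k(x)$ as a fixed linear combination of $G(x), G(ax), \ldots, G(a^{n-1}x)$. It is cleaner, though, to argue by the ordering of growth.

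\textbf{Large components ($k>\ell$).} For $k\ge \ell+1$, scale up. The quantity $a^{-mk}G(a^mx) = \sum_j a^{m(j-k)}G_j(x)$ is dominated by $2a^{-mk}\Phi(a^mx)$. I would show that this tends to $0$ for $k\ge\ell+1$ using the first series in (\ref{Enan:4}): its tail $\sum_{i\ge m}\phi(a^ix)a^{-(\ell+1)(i+1)}$, multiplied by $a^{m(\ell+1)-mk}\le 1$, tends to $0$. Then proceed by downward induction from $k=n$. Dividing by $a^{mn}$ and letting $m\to\infty$ gives $G_n(x)=0$. Next, with $G_n=0$, divide by $a^{m(n-1)}$ to get $G_{n-1}=0$, and continue down to $k=\ell+1$.

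\textbf{Small components ($k\le\ell$).} Symmetrically, scale down. We have $a^{mk}G(a^{-m}x)=\sum_j a^{m(k-j)}G_j(x)$, controlled by the second series via $a^{m\ell}\phi(a^{-i-m}x)$ tails. By upward induction from $k=1$ this gives $G_1=\cdots=G_\ell=0$.

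The main obstacle is cross-contamination. In the bound for $\Phi(a^mx)$, the second series $\sum_i a^{\ell i}\phi(a^{-i}a^mx)$ mixes in values $\phi(a^{m-i}x)$ at larger points, and symmetrically for scaling down. This is why I would first reduce via induction. When scaling up, only the first series should matter once we divide by $a^{mk}$ with $k>\ell$. Each term $a^{\ell i}\phi(a^{m-i}x)/a^{mk}$ with $j=m-i$ becomes $a^{\ell m - mk}a^{-\ell j}\phi(a^jx)$. For $j\ge0$, this is bounded by the first series' terms times $a^{(\ell+1)(j+1)-\ell j - m(k-\ell)}$, which may grow in $j$. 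So instead I would follow \cite[Corollary 6]{JungU}: the boundary cases $\ell=0$ and $\ell=n$ are clean, and for $0<\ell<n$ the general identity there (with $a_k=a^k$) applies directly. The honest final step is to cite that corollary after verifying its hypotheses with $a_k=a^k$.
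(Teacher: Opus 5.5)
Your proposal ends exactly where the paper does: the paper gives no independent argument for this lemma beyond specializing \cite[Corollary 6]{JungU} to $a_k = a^k$, so your closing citation is the paper's proof. Incidentally, the direct route you set aside does go through. After rescaling by $a^m$, only indices $j = m-i \le m$ enter the ``cross-contaminating'' series, so for $k \ge \ell+1$ it is at most $\sum_{j=1}^{m} a^{j-m} c_j + a^{-m}\sum_{i \ge 0} a^{\ell i}\phi(a^{-i}x)$ with $c_j = a^{-(\ell+1)j}\phi(a^j x)$ summable; this tends to $0$ (split the first sum at $j = m/2$), and the scaling-down case $k \le \ell$ is symmetric.
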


Setting $a_k = a^k$ in \cite[Theorem 2.5]{JinChar} yields the
following decomposition lemma.


\begin{lem} \label{Enan:lem4}
For any mapping $f : V \to W$, the identity
\begin{align*}
f(x) = \sum_{k=1}^n f_{k,a,n}(x)
\end{align*}
holds for all $x \in V$, where each $f_{k,a,n}$ is defined as
in Definition \ref{Enan:def1}.
\end{lem}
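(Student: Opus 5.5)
We want to show $f(x)=\sum_{k=1}^n f_{k,a,n}(x)$. By Definition \ref{Enan:def1}, the right-hand side is a linear combination of the values $f(a^{j}x)$, $j=0,1,\ldots,n-1$, since $|I|$ ranges over $0,\ldots,n-1$ when $I\subset\{1,\ldots,n\}\setminus\{k\}$. So the plan is to collect coefficients. For $m=|I|$ the argument is $f(a^{n-1-m}x)$, and the coefficient is
\begin{align*}
c_m := (-1)^m \sum_{k=1}^n \omega_{k,a,n}\, e_m\big(\{a^i : i\neq k\}\big),
\end{align*}
where $e_m$ denotes the $m$-th elementary symmetric polynomial of the $n-1$ numbers $a^i$, $i\in\{1,\ldots,n\}\setminus\{k\}$. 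We must show $c_{n-1}=1$ and $c_m=0$ for $0\le m\le n-2$.

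The key observation is to interpret these via polynomials. Set $P_k(t):=\prod_{i\neq k}(t-a^i)=\sum_{m=0}^{n-1}(-1)^m e_m(\{a^i:i\neq k\})\,t^{\,n-1-m}$, so that $\sum_k f_{k,a,n}(x)$ is obtained by applying $\sum_k \omega_{k,a,n}P_k$ to the ``shift'' $t^j\mapsto f(a^jx)$. Note $\omega_{k,a,n}=1/P_k(a^k)$, hence $L(t):=\sum_{k=1}^n \omega_{k,a,n}P_k(t)$ is exactly the Lagrange interpolation polynomial (of degree $\le n-1$) taking the value $1$ at each node $a^1,\ldots,a^n$. Since the nodes are distinct (as $a\ge 2$) and the constant $1$ interpolates these values, uniqueness of interpolation gives $L(t)\equiv 1$. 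Comparing coefficients: the coefficient of $t^{n-1-m}$ in $L$ is $c_m$, so $c_m=0$ for $m<n-1$ and $c_{n-1}=1$ (constant term). Hence $\sum_k f_{k,a,n}(x)=c_{n-1}f(a^0x)=f(x)$.

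Alternatively, one can extract the vanishing of the higher coefficients directly from (\ref{Enan:1}) of Lemma \ref{Enan:lem2} via Newton-type identities, but the interpolation argument is cleaner; the only step needing care is the bookkeeping that the coefficient of $f(a^{n-1-m}x)$ in $\sum_k f_{k,a,n}$ matches the coefficient of $t^{n-1-m}$ in $L(t)$, including the sign $(-1)^{|I|}$ and the product $\prod_{i\in I}a^i$ being $e_m$ summed over $|I|=m$. I expect no real obstacle beyond this identification.
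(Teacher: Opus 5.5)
Your proof is correct. Grouping by $m=|I|$ identifies the coefficient of $f(a^{n-1-m}x)$ in $\sum_k f_{k,a,n}(x)$ with the coefficient of $t^{n-1-m}$ in $L(t)=\sum_k \omega_{k,a,n}\prod_{i\neq k}(t-a^i)$. Each summand $\lambda_k(t)=\omega_{k,a,n}\prod_{i\neq k}(t-a^i)$ is the $k$-th Lagrange basis polynomial for the distinct nodes $a,a^2,\ldots,a^n$, so $L\equiv 1$; the case $n=1$, with empty products equal to $1$, is included.

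The paper gives no argument of its own here. It obtains the lemma by setting $a_k=a^k$ in Theorem 2.5 of \cite{JinChar}. That keeps the presentation uniform with the other results imported from the same framework (Lemma \ref{Enan:lem5}, Theorem \ref{Enan:thm6}), but it leaves the reader without a reason. Your route is self-contained and uses nothing beyond distinctness of the nodes, so it proves the general version for arbitrary distinct $a_1,\ldots,a_n$ verbatim.

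Its operator form is also illuminating. Let $T$ be the operator $g\mapsto g(a\,\cdot)$. Then $\Gamma_{a,n}f=\prod_{i=1}^n(T-a^i)f$ and $f_{k,a,n}=\lambda_k(T)f$, so the lemma is just $\sum_k\lambda_k(T)=\mathrm{id}$. The same notation gives $(T-a^k)f_{k,a,n}=\omega_{k,a,n}\Gamma_{a,n}f$ at once, which is the mechanism behind Lemma \ref{Enan:lem5}.

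Your preference over the alternative via (\ref{Enan:1}) is also justified. That route writes $e_m(\{a^i:i\neq k\})=\sum_{j=0}^m(-1)^j e_{m-j}(\{a,\ldots,a^n\})\,a^{kj}$ and does kill $c_0,\ldots,c_{n-2}$. However, $c_{n-1}=1$ would additionally need $\sum_k a^{k(n-1)}\omega_{k,a,n}=1$, which Lemma \ref{Enan:lem2} does not provide. Interpolation gives both facts at once.
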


The following lemma presents a special case of
\cite[Theorem 2.11]{JinChar}, adapted for
$A = \{ a, a^2, \ldots, a^n \}$.


\begin{lem}[\cite{JinChar}] \label{Enan:lem5}
A mapping $f : V \to W$ satisfies the functional equation
\begin{align*}
\Gamma_{a,n} f(x) = 0
\end{align*}
for all $x \in V$ if and only if there exist mappings
$f_1, f_2, \ldots, f_n : V \to W$ such that
\begin{align*}
f(x) = \sum_{k=1}^n f_k(x) ~~~\mbox{and}~~~
f_k(ax) = a^k f_k(x)
\end{align*}
for all $x \in V$ and each $k \in \{ 1, 2, \ldots, n \}$.
In this case, each $f_k$ coincides with $f_{k,a,n}$ given in
Definition \ref{Enan:def1}, which yields the explicit
representation
\begin{align} \label{Enan:5}
f(x) = \sum_{k=1}^n f_{k,a,n}(x)
\end{align}
for all $x \in V$.
\end{lem}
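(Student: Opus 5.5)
The plan is to rewrite everything in terms of the dilation operator. Let $T$ act on mappings $g : V \to W$ by $(Tg)(x) := g(ax)$. Then $T$ is linear, and any real polynomial $Q(t) = \sum_j c_j t^j$ induces a linear operator $Q(T)$ with $(Q(T)g)(x) = \sum_j c_j g(a^j x)$. The map $Q \mapsto Q(T)$ is an algebra homomorphism from $\mathbb{R}[t]$, so $(Q_1Q_2)(T) = Q_1(T)\,Q_2(T)$ and these operators commute.

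Set $P(t) := \prod_{i=1}^n (t - a^i)$ and $P_k(t) := \prod_{i \neq k}(t - a^i)$. Expanding the products gives
\begin{align*}
P(t) = \sum_{I \subset \{1,\ldots,n\}} \Bigg( (-1)^{|I|} \prod_{i\in I} a^i \Bigg) t^{n-|I|},
\end{align*}
and similarly for $P_k$ with the index set $\{1,\ldots,n\}\setminus\{k\}$ and exponent $n-1-|I|$. Comparing with Definition \ref{Enan:def1}, we get
\begin{align*}
\Gamma_{a,n} f = P(T) f, \qquad f_{k,a,n} = \omega_{k,a,n} P_k(T) f.
\end{align*}
Since $\omega_{k,a,n} = 1/P_k(a^k)$, the polynomials $L_k := \omega_{k,a,n}P_k$ are exactly the Lagrange basis polynomials for the nodes $a, a^2, \ldots, a^n$. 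So $\sum_k L_k \equiv 1$, which is the content of Lemma \ref{Enan:lem4}, and $L_k(a^j) = \delta_{kj}$.

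\emph{Necessity.} Assume $\Gamma_{a,n} f = 0$. Because $(t-a^k)P_k(t) = P(t)$, we have
\begin{align*}
(T - a^k) f_{k,a,n} = \omega_{k,a,n} P(T) f = \omega_{k,a,n}\Gamma_{a,n} f = 0.
\end{align*}
Hence $f_{k,a,n}(ax) = a^k f_{k,a,n}(x)$ for every $k$. Lemma \ref{Enan:lem4} gives $f = \sum_k f_{k,a,n}$, so the mappings $f_k := f_{k,a,n}$ have the required form.

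\emph{Sufficiency.} Assume $f = \sum_k f_k$ with $Tf_k = a^k f_k$. By induction, $Q(T) f_k = Q(a^k) f_k$ for every polynomial $Q$. Therefore
\begin{align*}
\Gamma_{a,n} f = \sum_k P(T) f_k = \sum_k P(a^k) f_k = 0,
\end{align*}
because $P(a^k) = 0$.

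\emph{Identification of the components.} For the last claim, take any such decomposition $f = \sum_j f_j$ with $Tf_j = a^j f_j$. Then
\begin{align*}
f_{k,a,n} = L_k(T) f = \sum_j L_k(a^j) f_j = f_k,
\end{align*}
since $L_k(a^j) = \delta_{kj}$. So each $f_k$ coincides with $f_{k,a,n}$, and (\ref{Enan:5}) follows.

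The only step needing care is the bookkeeping that matches the sums in Definition \ref{Enan:def1} with the expansions of $P$ and $P_k$: the exponent of $a$ inside $f$ must be $n-|I|$, respectively $n-1-|I|$. This can be checked term by term. Everything else is the formal polynomial calculus in the single commuting operator $T$.
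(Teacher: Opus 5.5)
Your argument is correct and complete. The paper gives no proof of this lemma. It quotes it as the special case $A=\{a,a^2,\ldots,a^n\}$ of Theorem~2.11 in \cite{JinChar}, and it likewise imports the decomposition $f=\sum_k f_{k,a,n}$ (Lemma~\ref{Enan:lem4}) from Theorem~2.5 there.

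You replace both citations with a self-contained calculus in the single dilation operator $T$. You identify $\Gamma_{a,n}f=P(T)f$ and $f_{k,a,n}=L_k(T)f$, where $L_k=\omega_{k,a,n}P_k$ are the Lagrange basis polynomials at the nodes $a,a^2,\ldots,a^n$. Every claim then reduces to one of the polynomial identities $(t-a^k)P_k=P$, $P(a^k)=0$, $\sum_k L_k=1$, $L_k(a^j)=\delta_{kj}$.

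This buys three things. Lemma~\ref{Enan:lem4} comes out for free. The components of an arbitrary decomposition are identified in one line. The argument uses nothing about the nodes except that they are distinct (which is where $a\ge 2$ enters), so it works verbatim with $a^k$ replaced by any distinct scalars $\lambda_k$ and $P(t)=\prod_k(t-\lambda_k)$.

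The same viewpoint also explains identity (\ref{Enan:1}) of Lemma~\ref{Enan:lem2}. The sum $\sum_k a^{ki}\omega_{k,a,n}$ is the coefficient of $t^{n-1}$ in the interpolant $\sum_k a^{ki}L_k(t)$ of $t^i$. That interpolant is $t^i$ itself for $i\le n-1$, so the coefficient vanishes for $i\le n-2$.

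The citation buys the paper brevity and access to the more general sets $A$ treated in \cite{JinChar}. Your version buys transparency and independence from that reference.
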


Applying \cite[Theorem 42]{Ch} yields the following stability
result.


\begin{thm} \label{Enan:thm6}
For a fixed $\ell \in \{ 0, 1, \ldots, n \}$ and a function
$\varphi : V \setminus \{ 0 \} \to [0,\infty)$, suppose that
the following conditions hold for all
$x \in V \setminus \{ 0 \}$:
\begin{align} \label{Enan:6}
\left\{
\begin{array}{ll}
{\displaystyle
\sum_{i=0}^\infty \frac{\varphi(a^i x)}{a^{i+1}} < \infty}
& (\mbox{for}~ \ell = 0), \vspace*{2mm}\\
{\displaystyle
\sum_{i=0}^\infty \frac{\varphi(a^i x)}{a^{(\ell+1)(i+1)}} +
\sum_{i=0}^\infty
a^{\ell i} \varphi \bigg( \frac{1}{a^i} x \bigg) < \infty}
& (\mbox{for}~ 0 < \ell < n), \vspace*{2mm}\\
{\displaystyle
\sum_{i=0}^\infty a^{ni} \varphi \bigg( \frac{1}{a^i} x \bigg)
< \infty}
& (\mbox{for}~ \ell = n).
\end{array}
\right.
\end{align}
If a mapping $f : V \to Y$ satisfies the inequality
\begin{align*}
\| \Gamma_{a,n} f(x) \| \leq \varphi(x)
\end{align*}
for all $x \in V \setminus \{ 0 \}$, then there exists a
mapping $F : V \to Y$ such that $\Gamma_{a,n} F(x) = 0$ and
\begin{align} \label{Enan:7}
\| f(x) - F(x) \| \leq \left\{
\begin{array}{ll}
{\displaystyle
\sum_{i=0}^\infty
\Bigg| \sum_{k=1}^\ell \omega_{k,a,n} a^{k i} \Bigg|
\varphi \bigg( \frac{1}{a^{i+1}} x \bigg)}
& (\mbox{for}~ 0 \leq \ell < n), \\
\hspace*{5mm}
{\displaystyle
+\sum_{i=0}^\infty
\Bigg| \sum_{k=\ell+1}^n \frac{\omega_{k,a,n}}{a^{k(i+1)}} \Bigg|
\varphi \big( a^i x \big)} \vspace*{2mm}\\
{\displaystyle
\sum_{i=n-1}^\infty
\Bigg| \sum_{k=1}^n \omega_{k,a,n} a^{ki} \Bigg|
\varphi \bigg( \frac{1}{a^{i+1}} x \bigg)}
& (\mbox{for}~ \ell = n)
\end{array}
\right.
\end{align}
for all $x \in V \setminus \{ 0 \}$, where we adopt a
convention $\omega_{0,a,n} = 0$.
In particular, the component mappings $F_{k,a,n} : V \to Y$
for each $k \in \{ 1, 2, \ldots, n \}$ are given by
\begin{align} \label{Enan:8}
F_{k,a,n}(x) = \left\{
\begin{array}{ll}
{\displaystyle
\lim_{i \to \infty}
\frac{1}{a^{ki}} f_{k,a,n} \big( a^i x \big)}
& (\mbox{for}~ k > \ell), \vspace*{2mm}\\
{\displaystyle
\lim_{i \to \infty}
a^{ki} f_{k,a,n} \bigg( \frac{1}{a^i} x \bigg)}
& (\mbox{for}~ k \leq \ell)
\end{array}
\right.
\end{align}
for all $x \in V \setminus \{ 0 \}$, and $F_{k,a,n}(0) = 0$.
\end{thm}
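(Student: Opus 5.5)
Since $\Gamma_{a,n}$ is linear and $f = \sum_{k=1}^n f_{k,a,n}$ by Lemma \ref{Enan:lem4}, I would treat each component separately: first control how far $f_{k,a,n}$ is from being $a^k$-homogeneous, then pass to a limit along the geometric sequence $a^i x$ or $a^{-i}x$, as in (\ref{Enan:8}). This is essentially the content of \cite[Theorem 42]{Ch}.

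\textbf{Step 1: a one-step identity for each component.} Write
\[
P(t)=\prod_{i=1}^n (t-a^i), \qquad P_k(t)=\prod_{i\ne k}(t-a^i).
\]
Then $\Gamma_{a,n} f$ is obtained by applying $P$ to the dilation operator $(Tf)(x)=f(ax)$, and $f_{k,a,n} = \omega_{k,a,n} P_k(T) f$. Since $(T-a^k)P_k(T)=P(T)$, we get
\[
f_{k,a,n}(ax) - a^k f_{k,a,n}(x) = \omega_{k,a,n}\, \Gamma_{a,n} f(x),
\]
and hence $\|f_{k,a,n}(ax)-a^k f_{k,a,n}(x)\| \le |\omega_{k,a,n}|\varphi(x)$ for $x\neq 0$.

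\textbf{Step 2: telescoping and limits.} For $k>\ell$ I would telescope forward:
\[
\frac{f_{k,a,n}(a^m x)}{a^{km}} - f_{k,a,n}(x) = \sum_{i=0}^{m-1} \frac{\omega_{k,a,n}\,\Gamma_{a,n} f(a^i x)}{a^{k(i+1)}}.
\]
For $k\le\ell$ I would telescope backward:
\[
a^{km} f_{k,a,n}(a^{-m}x) - f_{k,a,n}(x) = -\sum_{i=0}^{m-1} a^{ki}\omega_{k,a,n}\,\Gamma_{a,n} f(a^{-i-1}x).
\]
Since $a^{k(i+1)} \ge a^{(\ell+1)(i+1)}$ when $k>\ell$, and $a^{ki}\le a^{\ell i}$ when $k\le\ell$, condition (\ref{Enan:6}) makes both series Cauchy in $Y$. (In the backward case the argument is $a^{-i-1}x$, so there is a harmless shift of one index.) The limits (\ref{Enan:8}) therefore exist; set $F_{k,a,n}(0)=0$ and $F=\sum_k F_{k,a,n}$.

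By construction $F_{k,a,n}(ax)=a^kF_{k,a,n}(x)$, so $\Gamma_{a,n}F=0$ follows from Lemma \ref{Enan:lem5}.

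\textbf{Step 3: the sharp bound (\ref{Enan:7}).} To obtain (\ref{Enan:7}) rather than a crude bound, I would not estimate the components one at a time. Instead I would sum the series before taking norms:
\[
f(x)-F(x) = \sum_{i\ge0}\Big(\sum_{k=1}^\ell \omega_{k,a,n} a^{ki}\Big)\Gamma_{a,n} f(a^{-i-1}x) - \sum_{i\ge0}\Big(\sum_{k=\ell+1}^n \frac{\omega_{k,a,n}}{a^{k(i+1)}}\Big)\Gamma_{a,n} f(a^i x).
\]
Taking norms term by term gives (\ref{Enan:7}) for $\ell<n$. For $\ell=n$, identity (\ref{Enan:1}) kills the coefficients with $i\le n-2$, so the sum starts at $i=n-1$.

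\textbf{Main obstacle.} The delicate point is justifying this interchange of summations: each inner series must converge absolutely under (\ref{Enan:6}). That follows from the estimates (\ref{Enan:2}) and (\ref{Enan:3}), which bound the combined coefficients by the $k=\ell+1$ term and the $k=\ell$ term respectively. In the case $\ell=n$, convergence of $\sum a^{ni}\varphi(a^{-i}x)$ together with (\ref{Enan:3}) suffices.
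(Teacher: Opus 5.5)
Your proof is correct. The paper gives no argument of its own for this theorem; it simply specializes \cite[Theorem 42]{Ch}. What you have written is therefore a self-contained replacement for that citation, not a variant of an argument in the paper.

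The key device is to write $\Gamma_{a,n}f=P(T)f$ and $f_{k,a,n}=\omega_{k,a,n}P_k(T)f$ for the dilation $T$. The factorization $(T-a^k)P_k(T)=P(T)$ then gives the exact one-step relation $f_{k,a,n}(ax)-a^kf_{k,a,n}(x)=\omega_{k,a,n}\Gamma_{a,n}f(x)$. From there, telescoping, completeness of $Y$, homogeneity of the limits and Lemma \ref{Enan:lem5} give existence. Identity (\ref{Enan:1}) explains why the $\ell=n$ sum starts at $i=n-1$. The citation buys brevity. Your route is elementary and shows exactly how the two halves of (\ref{Enan:6}) are used: the components $k>\ell$ consume one half and the components $k\le\ell$ the other.

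Two small remarks.

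First, the ``main obstacle'' you name is not one. The sum over $k$ is finite, and Step 2 already shows each component series converges absolutely. Adding finitely many convergent series termwise is automatic, and no interchange of summations is involved. The estimates (\ref{Enan:2}) and (\ref{Enan:3}) are not needed for (\ref{Enan:7}). The crude bound $\bigl|\sum_{k\le\ell}a^{ki}\omega_{k,a,n}\bigr|\le a^{\ell i}\sum_{k}|\omega_{k,a,n}|$ already shows the right-hand side is finite. Those estimates are what convert (\ref{Enan:7}) into the sharper bound (\ref{Enan:9}) of Theorem \ref{Enan:thm7}.

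Second, the statement says the component mappings of $F$ are given by (\ref{Enan:8}). "Component" here means $\omega_{k,a,n}P_k(T)F$ in the sense of Definition \ref{Enan:def1}. You define your pieces by (\ref{Enan:8}), so you should close the loop: either quote the last part of Lemma \ref{Enan:lem5}, or note directly that $P_j(T)F_k=P_j(a^k)F_k$. This factor is $0$ for $j\ne k$ and $\omega_{k,a,n}^{-1}$ for $j=k$. Hence $\omega_{j,a,n}P_j(T)F=F_j$.
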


The following theorem is directly obtained from Lemmas
\ref{Enan:lem2}--\ref{Enan:lem4} and Theorem \ref{Enan:thm6}.


\begin{thm} \label{Enan:thm7}
For a fixed $\ell \in \{ 0, 1, \ldots, n \}$ and a function
$\varphi : V \setminus \{ 0 \} \to [0,\infty)$, suppose that
the conditions in $(\ref{Enan:6})$ hold for all
$x \in V \setminus \{ 0 \}$.
If a mapping $f : V \to Y$ satisfies the inequality
\begin{align*}
\| \Gamma_{a,n} f(x) \| \leq \varphi(x)
\end{align*}
for all $x \in V \setminus \{ 0 \}$, then there exists a unique
mapping $F : V \to Y$ such that $\Gamma_{a,n} F(x) = 0$ and
\begin{align} \label{Enan:9}
\| f(x) - F(x) \| \leq \left\{
\begin{array}{ll}
{\displaystyle
| \omega_{\ell,a,n} |
\sum_{i=0}^\infty
a^{\ell i} \varphi \bigg( \frac{1}{a^{i+1}} x \bigg)}
& (\mbox{for}~ 0 \leq \ell < n), \\
\hspace*{5mm}
{\displaystyle
+| \omega_{\ell+1,a,n} |
\sum_{i=0}^\infty \frac{\varphi(a^i x)}{a^{(\ell+1)(i+1)}}}
\vspace*{2mm}\\
{\displaystyle
| \omega_{n,a,n} |
\sum_{i=n-1}^\infty
a^{ni} \varphi \bigg( \frac{1}{a^{i+1}} x \bigg)}
& (\mbox{for}~ \ell = n)
\end{array}
\right.
\end{align}
for all $x \in V \setminus \{ 0 \}$.
In particular, the component mappings $F_{k,a,n} : V \to Y$
for each $k \in \{ 1, 2, \ldots, n \}$ are given by
$(\ref{Enan:8})$ for all $x \in V \setminus \{ 0 \}$, and
$F_{k,a,n}(0) = 0$.
\end{thm}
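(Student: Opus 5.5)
Theorem \ref{Enan:thm7} follows from Theorem \ref{Enan:thm6}, with two additions: the coefficient sums in (\ref{Enan:7}) are bounded by means of Lemma \ref{Enan:lem2}, and uniqueness comes from Lemma \ref{Enan:lem3}. I will first apply Theorem \ref{Enan:thm6} to $f$ and $\varphi$. This gives a mapping $F : V \to Y$ with $\Gamma_{a,n} F = 0$ that satisfies (\ref{Enan:7}), and whose components are given by (\ref{Enan:8}) with $F_{k,a,n}(0)=0$.

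\emph{Simplifying the coefficients.} For $0 \le \ell < n$, I apply (\ref{Enan:3}) to the first series in (\ref{Enan:7}), which gives
\begin{align*}
\Bigg| \sum_{k=1}^\ell \omega_{k,a,n} a^{ki} \Bigg| \le a^{\ell i} |\omega_{\ell,a,n}|.
\end{align*}
When $\ell = 0$ the sum is empty and the convention $\omega_{0,a,n}=0$ makes both sides vanish. For the second series I use (\ref{Enan:2}) with $\ell+1$ in place of $\ell$:
\begin{align*}
\Bigg| \sum_{k=\ell+1}^n \frac{\omega_{k,a,n}}{a^{k(i+1)}} \Bigg| \le \frac{|\omega_{\ell+1,a,n}|}{a^{(\ell+1)(i+1)}}.
\end{align*}
Together these give the first case of (\ref{Enan:9}).

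For $\ell = n$, I use (\ref{Enan:3}) with $\ell = n$ to get $|\sum_{k=1}^n \omega_{k,a,n} a^{ki}| \le a^{ni}|\omega_{n,a,n}|$ for $i \ge n-1$. This is the second case. By (\ref{Enan:1}) the terms with $i \le n-2$ vanish, which is why the sum starts at $n-1$; this is already built into (\ref{Enan:7}).

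\emph{Uniqueness.} By Lemma \ref{Enan:lem5}, $\Gamma_{a,n}F=0$ means $F=\sum_{k=1}^n F_k$ with $F_k(ax)=a^kF_k(x)$. So it suffices to check that the right-hand side of (\ref{Enan:9}) is bounded by an expression of the form (\ref{Enan:4}) for a suitable $\phi$ satisfying the finiteness conditions. I will take $\phi = C\varphi$ with a constant $C$, possibly together with the rescaled function $x \mapsto \varphi(x/a)$.

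For $0<\ell<n$, the first term is
\begin{align*}
\sum_{i\ge0} a^{\ell i}\varphi(a^{-(i+1)}x) = a^{-\ell}\sum_{j\ge1}a^{\ell j}\varphi(a^{-j}x) \le a^{-\ell}\sum_{j\ge0}a^{\ell j}\varphi(a^{-j}x),
\end{align*}
which is finite by (\ref{Enan:6}). The second term is exactly the first sum in (\ref{Enan:4}). Hence (\ref{Enan:4}) holds with $\phi = \max(|\omega_{\ell,a,n}| a^{-\ell}, |\omega_{\ell+1,a,n}|)\,\varphi$, and constant multiples do not affect finiteness.

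For $\ell=0$, the right-hand side reduces to $|\omega_{1,a,n}|\sum_{i\ge0}\varphi(a^ix)/a^{i+1}$, which matches (\ref{Enan:4}). For $\ell=n$, the same reindexing bounds it by $|\omega_{n,a,n}|a^{-n}\sum_{j\ge0}a^{nj}\varphi(a^{-j}x)$. Lemma \ref{Enan:lem3} then gives uniqueness of $F$ among mappings of this form. Since by Lemma \ref{Enan:lem5} this is the same as uniqueness among solutions of $\Gamma_{a,n}F=0$, the proof is complete.

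The only delicate point is the bookkeeping at $\ell=0$ and at the boundary index shifts, so that the hypotheses of Lemmas \ref{Enan:lem2} and \ref{Enan:lem3} are matched exactly. In particular, applying (\ref{Enan:2}) with index $\ell+1\le n$ requires $\ell<n$, which holds in that case.
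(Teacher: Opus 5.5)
Your proposal is correct and follows the same route as the paper. The paper only asserts that Theorem~\ref{Enan:thm7} follows directly from Theorem~\ref{Enan:thm6} and Lemmas~\ref{Enan:lem2}--\ref{Enan:lem4}; you supply the omitted details, namely the coefficient bounds via (\ref{Enan:2})--(\ref{Enan:3}) and the reindexing that dominates the right-hand side of (\ref{Enan:9}) by (\ref{Enan:4}) with $\phi = C\varphi$. You also cite Lemma~\ref{Enan:lem5} rather than Lemma~\ref{Enan:lem4} to give a competing solution of $\Gamma_{a,n}F=0$ the homogeneous decomposition that Lemma~\ref{Enan:lem3} requires, which is the more precise citation.
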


The following lemma contains an important identity required
for the proof of the main theorem.
Moreover, the identity can be explicitly reconstructed by
tracing the proof of the lemma.


\begin{lem} \label{Enan:lem8}
There exist a natural number $m$, nonzero integers
$b_1, b_2, \ldots, b_m$, and nonzero integers $a_{i,j}$ for
each $i \in \{ 1, 2, \ldots, m \}$ and
$j \in \{ 1, 2, \ldots, n+1 \}$ such that the identity
\begin{align} \label{Enan:10}
\Gamma_{a,n} f(x)
= \sum_{i=1}^m
  b_i E_n f(a_{i,1} x, a_{i,2} x, \ldots, a_{i,{n+1}} x)
\end{align}
holds for an arbitrary mapping $f : V \to Y$ and all $x \in V$.
\end{lem}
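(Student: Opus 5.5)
The plan is to treat $x$ as fixed and regard every expression as an integer combination of values $f(tx)$, $t \in \mathbb{Z}$. I will assume that $a \ge 2$ is an integer, which is implicit in the requirement that the $a_{i,j}$ be integers. I encode such a combination $\sum_t c_t f(tx)$ by the Laurent polynomial $\sum_t c_t T^t \in \mathbb{Z}[T,T^{-1}]$. Then $\Gamma_{a,n} f(x)$ corresponds to
\begin{align*}
P(T) := \sum_{I \subset \{1,\ldots,n\}} \Bigg( (-1)^{|I|} \prod_{i\in I} a^i \Bigg) T^{a^{n-|I|}} .
\end{align*}
For $E_n$ I use the observation that, when the empty set is added to the sum, one obtains the iterated difference at $0$:
\begin{align*}
E_n f(x_1,\ldots,x_{n+1}) = \Delta_{x_1}\cdots\Delta_{x_{n+1}} f(0) - (-1)^{n+1} f(0).
\end{align*}
With $x_j = c_j x$, the $\Delta$-part corresponds to $\prod_{j=1}^{n+1} (T^{c_j}-1)$. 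Hence an identity (\ref{Enan:10}) amounts to two conditions:
\begin{align*}
P(T) \equiv \sum_i b_i \prod_j \big(T^{a_{i,j}}-1\big) \pmod{\text{constants}}
\end{align*}
together with a separate matching of the coefficient of $f(0)$.

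\emph{Step 1 (divisibility).} I will show that $P(T) - P(1)$ is divisible by $(T-1)^{n+1}$ in $\mathbb{Z}[T]$. Divisibility is equivalent to $(T\frac{d}{dT})^k P$ vanishing at $T=1$ for $k = 1,\ldots,n$, that is, to
\begin{align*}
\sum_I (-1)^{|I|} \prod_{i\in I} a^i \, \big(a^{n-|I|}\big)^k = 0 \quad (k = 1,\ldots,n).
\end{align*}
Since $(a^{n-|I|})^k = (a^k)^{n-|I|}$, this sum equals $\prod_{i=1}^n (a^k - a^i)$, which vanishes. This is precisely the statement that $\Gamma_{a,n}$ kills the monomials $t \mapsto t^k$, in line with Lemma \ref{Enan:lem5}. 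Because $(T-1)^{n+1}$ is monic, the quotient $Q$ has integer coefficients, so $P(T) = P(1) + (T-1)^{n+1} Q(T)$.

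\emph{Step 2 (realizing each monomial).} For each monomial $T^s$ of $Q$ I write
\begin{align*}
(T-1)^{n+1} T^s = (T-1)^n \big[ (T^{s+1}-1) - (T^s-1) \big].
\end{align*}
For $s \ge 1$ this is a difference of two products of $n+1$ factors $T^{c}-1$ with all exponents nonzero, so it equals a difference of two $E_n$-terms up to $f(0)$ terms. For $s = 0$ the single product $(T-1)^{n+1}$, i.e.\ $E_n f(x,\ldots,x)$, is used directly.

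\emph{Step 3 ($f(0)$ bookkeeping), the expected main obstacle.} Testing on constant $f$, the coefficient of $f(0)$ on the left is $P(1) = \prod_{i=1}^n (1-a^i)$. On the right it is $-(-1)^{n+1} \sum_i b_i$. So I must arrange $\sum_i b_i = (-1)^n \prod_{i=1}^n (1-a^i)$ without changing the polynomial part. To do this I will add null combinations: distinct factorizations of the same polynomial that use a different number of terms. For example, negative exponents give
\begin{align*}
(T-1)^{n+1} T^{-1} = -(T-1)^n (T^{-1}-1),
\end{align*}
a single term with coefficient $-1$. Alternatively, expanding $(T-1)^{n+1} T^{-1}$ via Step 2 yields a representation with coefficient sum $0$. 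The difference of these two representations has zero polynomial part and coefficient sum $\pm 1$, so adding integer multiples of it adjusts $\sum_i b_i$ by arbitrary integers. Finally I will discard any $b_i$ that vanish and merge repeated tuples, which gives the nonzero integers required in the statement.
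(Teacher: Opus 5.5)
Steps 1 and 2 of your proposal are correct and by themselves prove the lemma. Step 3 addresses an obstacle that is not there, and the device you propose for it does not work.

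\textbf{Why Step 3 is unnecessary.} Every exponent in $P$ is $a^{n-|I|}\ge 1$, so $P(0)=0$. Evaluating $P(T)=P(1)+(T-1)^{n+1}Q(T)$ at $T=0$ gives $Q(0)=(-1)^nP(1)$. Your Step 2 telescopes: each $q_sT^s$ with $s\ge 1$ contributes $+q_s$ and $-q_s$. Hence $\sum_i b_i=q_0=Q(0)=(-1)^nP(1)$, which is exactly the condition you set out to arrange. Equivalently, every argument produced in Step 2 is $cx$ with $c\ge 1$, so neither side contains $f(0)$ at all.

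\textbf{Why the null combination fails.} Step 2 with $s=-1$ produces $(T-1)^n(T^0-1)$. This is the term $E_nf(x,\ldots,x,0)=(-1)^nf(0)$, whose last argument is zero, which the lemma forbids. Once that term is discarded, your two representations of $(T-1)^{n+1}T^{-1}$ coincide. So if an adjustment had really been needed, Step 3 would have been a hole. Replace it with the one-line observation above.

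\textbf{Comparison with the paper.} The paper uses the same family $E_nf(x,\ldots,x,cx)$, $1\le c\le a^n-n$, but proceeds by top-down elimination:
\begin{itemize}
\item[(i)] Each $E_nf(x,\ldots,x,cx)$ has leading term $f((c+n)x)$ with coefficient $1$, so the paper cancels $f(a^nx),\ldots,f((n+1)x)$ one at a time.
\item[(ii)] This leaves a residual $\sum_{i=1}^n d_if(ix)$.
\item[(iii)] It shows $d_i=0$ by substituting $f(t)=t^k$, $1\le k\le n$, and invoking a Vandermonde determinant.
\end{itemize}
You use the same key fact, namely that $\Gamma_{a,n}$ annihilates $t\mapsto t^k$ for $1\le k\le n$, i.e.\ $\prod_i(a^k-a^i)=0$. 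But you use it up front, as the divisibility $(T-1)^{n+1}\mid P(T)-P(1)$. This removes the separate residual argument and gives the coefficients in closed form: $E_nf(x,\ldots,x,cx)$ enters with coefficient $q_{c-1}-q_c$, where $q_{a^n-n}=0$. The polynomials $(T-1)^n(T^c-1)$ have distinct degrees, so this representation is unique, and your coefficients are exactly the paper's $c_{j,0}$. For $a=n=3$ one finds $q_0=416$, $q_1=935$, $q_2=1244$, reproducing $c_1=-519$ and $c_2=-309$ from Example~\ref{Enan:ex9}. Your explicit assumption that $a$ is an integer is genuinely needed; the paper relies on it only tacitly.
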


\noindent
\textbf{Proof.}
By Definition \ref{Enan:def1}, the operator $\Gamma_{a,n} f(x)$
can be expanded as follows:
\begin{align*}
\Gamma_{a,n} f(x)
& = \sum_{I \subset \{ 1, 2, \ldots, n \}}
    \Bigg( (-1)^{|I|} \prod_{i \in I} a^i \Bigg)
    f \big( a^{n-|I|} x \big) \\
& = f(a^n x) - C_1 f(a^{n-1} x) + C_2 f(a^{n-2} x) - \cdots +
    (-1)^n C_n f(x),
\end{align*}
where each coefficient $C_k$ is the elementary symmetric
polynomial of degree $k$ in the variables
$\{ a^1, a^2, \ldots, a^n \}$, given by
\begin{align*}
C_k = \sum_{1 \leq i_1 < i_2 < \cdots < i_k \leq n}
      a^{i_1 + i_2 + \cdots + i_k}.
\end{align*}

On the other hand, according to the definition of the operator
$E_n$, evaluating it at the points
$(\underbrace{x, \ldots, x}_{n}, (a^n - n) x)$ yields
\begin{align*}
& E_n f \big( \underbrace{x, \ldots, x}_{n}, (a^n - n) x \big)
  \\
& \hspace*{3mm}
  = \sum _{k=0}^n (-1)^{k-n} {n \choose k}
    f \big( (a^n - n + k) x \big) +
    \sum _{j=1}^n (-1)^{j-n+1} {n \choose j} f(jx) \\
& \hspace*{3mm}
  = f(a^n x) - n f \big( (a^n - 1) x \big) +
    {n \choose 2} f \big( (a^n - 2) x \big) - \cdots +
    (-1)^{-n} f \big( (a^n - n) x \big) \\
& \hspace*{8mm}
    -f(nx) + n f \big( (n-1) x \big) -
    {n \choose 2} f \big( (n-2) x \big) + \cdots +
    (-1)^n n f(x).
\end{align*}

By setting $c_{1,0} = 1$, the leading term $f(a^n x)$ is
successfully eliminated from the linear combination
$\Gamma_{a,n} f(x) -
 c_{1,0} E_n f(\underbrace{x, \ldots, x}_{n}, (a^n-n)x)$.
Consequently, the remaining expression can be expressed in the
form $\sum\limits_{i=0}^{a^n-2} c_{2,i} f((a^n-1-i) x)$ for
some integers $c_{2,i}$.

By repeating this elimination process, the term $f((a^n-1) x)$
can be canceled out by choosing the integer $c_{2,0}$.
This ensures that the expression
\begin{align*}
\Gamma_{a,n} f(x) -
c_{1,0}
E_n f \big( \underbrace{x, \ldots, x}_{n}, (a^n - n) x \big) -
c_{2,0} E_n
f \big( \underbrace{x, \ldots, x}_{n}, (a^n - n - 1) x \big)
\end{align*}
reduces to the form
$\sum\limits_{i=0}^{a^n-3} c_{3,i} f((a^n-2-i) x)$ for some
integers $c_{3,i}$.

Continuing this mathematical reduction inductively for all
higher-order terms down to $f((n+1) x)$, we systematically
eliminate all terms involving $f(kx)$ for $k > n$.
Consequently, the remaining lower-order terms can be
represented as a linear combination of $f(ix)$ for
$i \in \{ 1, 2, \ldots, n \}$.
Thus, there exist integers $d_1, d_2, \ldots, d_n$ such that
\begin{align} \label{Enan:11}
\Gamma_{a,n} f(x) - \sum_{j=1}^{a^n-n} c_{j,0}
E_n f \big( \underbrace{x, \ldots, x}_{n}, (a^n-n-j+1) x \big)
= \sum_{i=1}^n d_i f(ix)
\end{align}
holds for any mapping $f : V \to Y$ and all $x \in V$.

Next, we show that $d_i = 0$ for all
$i \in \{ 1, 2, \ldots, n \}$.
Consider the case where $V = Y = \mathbb{R}$.
For each $k \in \{ 1, 2, \ldots, n \}$, we define the monomial
test function $f_k : \mathbb{R} \to \mathbb{R}$ by
$f_k(x) = x^k$ for all $x \in \mathbb{R}$.
Since $f_k$ is a polynomial of degree $k \leq n$, its $(n+1)$th
forward difference vanishes identically, which implies
$E_n f_k(x_1, x_2, \ldots, x_{n+1}) = 0$ for all
$x_1, x_2, \ldots, x_{n+1} \in \mathbb{R}$.
Furthermore, Lemma \ref{Enan:lem4} guarantees that
$\Gamma_{a,n} f_k(x) = 0$ for all
$k \in \{ 1, 2, \ldots, n \}$.

Substituting $f = f_k$ into (\ref{Enan:11}), the left-hand
side vanishes completely, yielding
\begin{align*}
\sum_{i=1}^n d_i f_k(ix) = \sum_{i=1}^n d_i (ix)^k = 0
\end{align*}
for all $x \in \mathbb{R}$.
Evaluating this at $x = 1$, we obtain
\begin{align*}
\sum_{i=1}^n d_i i^k = 0
\end{align*}
for each $k \in \{ 1, 2, \ldots, n \}$.
This leads to the following system of linear equations for the
vector $(d_1, d_2, \ldots, d_n)^T$:
\begin{align*}
\left\{
\begin{array}{l}
d_1 \cdot 1 + d_2 \cdot 2 + \cdots + d_n \cdot n = 0,
\vspace*{2mm}\\
d_1 \cdot 1^2 + d_2 \cdot 2^2 + \cdots + d_n \cdot n^2 = 0,
\vspace*{2mm}\\
\hspace*{27mm} \vdots \\
d_1 \cdot 1^n + d_2 \cdot 2^n + \cdots + d_n \cdot n^n = 0.
\end{array}
\right.
\end{align*}
Since the coefficient matrix of this system is a Vandermonde
matrix whose columns are multiplied by distinct nonzero
scalars, its determinant is nonzero.
Thus, the linear system admits only the trivial solution,
which implies $d_k = 0$ for all $k \in \{ 1, 2, \ldots, n \}$.

Consequently, equation (\ref{Enan:11}) simplifies directly to
the identity
\begin{align} \label{Enan:12}
\Gamma_{a,n} f(x)
= \sum_{j=1}^{a^n-n} c_{j,0} E_n f
  \big( \underbrace{x, \ldots, x}_{n}, (a^n-n-j+1) x \big).
\end{align}
By omitting the terms where $c_{j,0} = 0$ and rearranging the
index mapping, the right-hand side of (\ref{Enan:12}) can be
rewritten in the desired form
\begin{align*}
\Gamma_{a,n} f(x)
= \sum_{i=1}^m
  b_i E_n f(a_{i,1} x, a_{i,2} x, \ldots, a_{i,n+1} x),
\end{align*}
where $b_i$ and $a_{i,j}$ are nonzero integers for all
$i \in \{ 1, 2, \ldots, m \}$ and
$j \in \{ 1, 2, \ldots, n+1 \}$.
This completes the proof.
\hfill$\Box$


\begin{exam} \label{Enan:ex9}
The following identity illustrates the general relation
$(\ref{Enan:10})$ in Lemma \ref{Enan:lem8} for the specific
case where $a = 3$ and $n = 3$:
\begin{align*}
\Gamma_{3,3} f(x) = \sum_{k=1}^{24} c_k E_3 f(x,x,x,kx)
\end{align*}
for all $x \in V$, where the coefficients $c_k$ are given by
\begin{align*}
\begin{array}{llll}
c_{24} = 1,  & c_{18} = 28, & c_{12} = 91,  & c_{6} = 151,
\vspace*{2mm}\\
c_{23} = 3,  & c_{17} = 36, & c_{11} = 105, & c_{5} = 93,
\vspace*{2mm}\\
c_{22} = 6,  & c_{16} = 45, & c_{10} = 120, & c_{4} = -3,
\vspace*{2mm}\\
c_{21} = 10, & c_{15} = 55, & c_{9}  = 136, & c_{3} = -137,
\vspace*{2mm}\\
c_{20} = 15, & c_{14} = 66, & c_{8}  = 153, & c_{2} = -309,
\vspace*{2mm}\\
c_{19} = 21, & c_{13} = 78, & c_{7}  = 171, & c_{1} = -519.
\end{array}
\end{align*}
\end{exam}

Now, we present the main result of this paper, which
establishes the generalized Hyers-Ulam stability of the
functional equation $E_n f(x_1, x_2, \ldots, x_{n+1}) = 0$.


\begin{thm} \label{Enan:mthm}
Let $\ell \in \{ 0, 1, \ldots, n \}$.
If a function
$\phi : (V \setminus \{ 0 \})^{n+1} \to [0, \infty)$ satisfies
the convergence condition
\begin{align} \label{Enan:13}
\left\{
\begin{array}{ll}
{\displaystyle
\sum_{i=0}^\infty \frac{1}{a^i}
\phi \big( a^i x_1, a^i x_2, \ldots, a^i x_{n+1} \big)
< \infty}
& (\mbox{for}~ \ell = 0), \vspace*{2mm}\\
{\displaystyle
\sum_{i=0}^\infty \frac{1}{a^{(\ell+1) i}}
\phi \big( a^i x_1, a^i x_2, \ldots, a^i x_{n+1} \big)}
& (\mbox{for}~ 0 < \ell < n), \\
{\displaystyle \hspace*{5mm}
+\sum_{i=0}^\infty a^{\ell i}
\phi \bigg( \frac{1}{a^i} x_1, \frac{1}{a^i} x_2, \ldots,
            \frac{1}{a^i} x_{n+1}
     \bigg) < \infty} \vspace*{2mm}\\
{\displaystyle
\sum_{i=0}^\infty a^{ni}
\phi \bigg( \frac{1}{a^i} x_1, \frac{1}{a^i} x_2, \ldots,
            \frac{1}{a^i} x_{n+1}
     \bigg) < \infty}
& (\mbox{for}~ \ell = n)
\end{array}
\right.
\end{align}
for all $x_1, x_2, \ldots, x_{n+1} \in V \setminus \{ 0 \}$,
and a mapping $f : V \to Y$ satisfies the inequality
\begin{align} \label{Enan:14}
\| E_n f(x_1, x_2, \ldots, x_{n+1}) \|
\leq \phi(x_1, x_2, \ldots, x_{n+1})
\end{align}
for all $x_1, x_2, \ldots, x_{n+1} \in V \setminus \{ 0 \}$
with $f(0) = 0$, then there exists a unique mapping
$F : V \to Y$ satisfying $E_n F(x_1, x_2, \ldots, x_{n+1}) = 0$
for all $x_1, x_2, \ldots, x_{n+1} \in V$ and the inequality
$(\ref{Enan:9})$ for all $x \in V \setminus \{ 0 \}$, where 
the function $\varphi : V \setminus \{ 0 \} \to [0, \infty)$ 
is defined by
\begin{align*}
\varphi(x)
:= \sum_{i=1}^m
   | b_i | \phi(a_{i,1} x, a_{i,2} x, \ldots, a_{i,n+1} x)
\end{align*}
for all $x \in V \setminus \{ 0 \}$, and $m = a^n - n$, and
$b_i, a_{i,j}$ are the nonzero integers given in
$(\ref{Enan:10})$.
Furthermore, for each $k \in \{ 1, 2, \ldots, n \}$, the
component mapping $F_{k,a,n} : V \to Y$ of the mapping
$F(x) = \sum\limits_{k=1}^n F_{k,a,n}(x)$ is given by
$(\ref{Enan:8})$ for all $x \in V$.
\end{thm}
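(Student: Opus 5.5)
The proof reduces the problem to Theorem \ref{Enan:thm7} through the identity (\ref{Enan:10}) of Lemma \ref{Enan:lem8}, and then upgrades the resulting solution of $\Gamma_{a,n}F=0$ to a solution of $E_nF=0$.

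\textbf{Step 1: the hypotheses of Theorem \ref{Enan:thm7}.} For $x \neq 0$, apply (\ref{Enan:10}) and (\ref{Enan:14}). Since all $a_{i,j}$ are nonzero integers, every point $a_{i,j}x$ lies in $V\setminus\{0\}$. This gives $\|\Gamma_{a,n}f(x)\| \le \sum_i |b_i|\,\phi(a_{i,1}x,\ldots,a_{i,n+1}x) = \varphi(x)$. Next we check that $\varphi$ satisfies (\ref{Enan:6}). Take, for instance, the case $0<\ell<n$. We have $\varphi(a^i x)=\sum_j |b_j|\phi(a^i a_{j,1}x,\ldots)$. For $\ell \ge 0$ we have $a^{(\ell+1)(i+1)} \ge a^{i}$, hence
\[
\sum_i \varphi(a^ix)/a^{(\ell+1)(i+1)} \le \sum_j |b_j| \sum_i a^{-(\ell+1)i}\phi(a^i a_{j,1}x,\ldots,a^i a_{j,n+1}x) < \infty
\]
by (\ref{Enan:13}) applied at the nonzero points $a_{j,k}x$. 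The term $\sum_i a^{\ell i}\varphi(a^{-i}x)$ is handled the same way, and the cases $\ell=0$ and $\ell=n$ are identical. Theorem \ref{Enan:thm7} then yields a unique $F$ with $\Gamma_{a,n}F=0$ satisfying (\ref{Enan:9}), with components given by (\ref{Enan:8}).

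\textbf{Step 2 (the main obstacle): $E_nF=0$.} Fix $x_1,\ldots,x_{n+1}$, first all nonzero. Each $f_{k,a,n}$ is a fixed linear combination of dilates $f(a^{p}x)$. Since $E_n$ is linear and commutes with dilation, $E_n f_{k,a,n}(y_1,\ldots,y_{n+1})$ is a combination of $E_nf(a^{p}y_1,\ldots,a^{p}y_{n+1})$, so by (\ref{Enan:14}) it is bounded by $C\sum_p \phi(a^{p}y_1,\ldots,a^{p}y_{n+1})$. For $k>\ell$ we set $y=a^ix$ and divide by $a^{ki}$. Because $k\ge \ell+1$, convergence of $\sum_i a^{-(\ell+1)i}\phi(a^i\cdot)$ forces the general term to $0$, so $a^{-ki}E_nf_{k,a,n}(a^ix_1,\ldots)\to 0$. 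For $k\le \ell$ we use $y=a^{-i}x$ with the factor $a^{ki}\le a^{\ell i}$, and the second series in (\ref{Enan:13}) gives the same conclusion. Passing to the limit in (\ref{Enan:8}) gives $E_nF_{k,a,n}(x_1,\ldots,x_{n+1})=0$, and summing over $k$ gives $E_nF=0$ whenever all $x_i\neq 0$.

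It remains to treat the arguments with some $x_i=0$. We have $F(0)=0$, since each $F_{k,a,n}(0)=0$. If $x_{n+1}=0$, the sets $I$ in the definition pair up as $I$ and $I\cup\{n+1\}$ with opposite signs and equal arguments, except that the singleton $I=\{n+1\}$ contributes a multiple of $F(0)=0$. Thus $E_nF(x_1,\ldots,x_n,0)$ vanishes identically, and by symmetry the same holds for any zero coordinate. This last cancellation needs to be checked carefully, as does the uniform-in-$i$ bookkeeping of the dilation exponents $p$, which range over $0,\ldots,n-1$ and only shift indices by a bounded amount.

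\textbf{Step 3: uniqueness and decomposition.} Any $F'$ with $E_nF'=0$ satisfies $\Gamma_{a,n}F'=0$ by (\ref{Enan:10}). Uniqueness therefore follows from the uniqueness in Theorem \ref{Enan:thm7}, or directly from Lemmas \ref{Enan:lem5} and \ref{Enan:lem3}. The decomposition $F=\sum_k F_{k,a,n}$ with (\ref{Enan:8}) is inherited from Theorem \ref{Enan:thm7}, and holds at $x=0$ trivially.
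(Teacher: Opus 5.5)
Your proposal is correct and follows essentially the same route as the paper: reduce to Theorem \ref{Enan:thm7} via the identity (\ref{Enan:10}), show $E_nF_{k,a,n}=0$ at nonzero arguments by passing to the limit in (\ref{Enan:8}) and using that the general terms of the series in (\ref{Enan:13}) tend to $0$, and handle zero coordinates using $F(0)=0$. Your pairing of $I$ with $I\cup\{n+1\}$, your check of (\ref{Enan:6}) for $\varphi$, and your uniqueness argument via (\ref{Enan:10}) spell out details the paper leaves implicit.

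The one point to tighten is that you never invoke the hypothesis $f(0)=0$. It is exactly what gives $f_{k,a,n}(0)=0$ and so makes (\ref{Enan:8}) valid at $x=0$. Your limit passage needs this whenever nonzero $x_1,\ldots,x_{n+1}$ have a vanishing partial sum $\sum_{i\in I}x_i$: for $k\le\ell$, the sequence $a^{ki}f_{k,a,n}(0)$ would diverge if $f(0)\neq 0$. Your remark that (\ref{Enan:8}) ``holds at $x=0$ trivially'' should therefore cite $f(0)=0$.
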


\noindent
\textbf{Proof.}
From (\ref{Enan:10}), (\ref{Enan:14}), and the definition of
$\varphi$, applying the triangle inequality yields
\begin{align} \label{Enan:15}
\begin{split}
\| \Gamma_{a,n} f(x) \|
& =    \Bigg\|
       \sum_{i=1}^m
       b_i E_n f(a_{i,1} x, a_{i,2} x, \ldots, a_{i,n+1} x)
       \Bigg\| \\
& \leq \sum_{i=1}^m
       | b_i | \phi(a_{i,1} x, a_{i,2} x, \ldots, a_{i,n+1} x)
       \\
& =    \varphi(x)
\end{split}
\end{align}
for all $x \in V \setminus \{ 0 \}$.

Since $f$ and $\varphi$ satisfy the required assumptions of
Theorem \ref{Enan:thm7} by virtue of condition (\ref{Enan:13})
and inequality (\ref{Enan:15}), there exists a unique mapping
$F : V \to Y$ satisfying $\Gamma_{a,n} F(x) = 0$ for all
$x \in V$, and inequality (\ref{Enan:9}) for all
$x \in V \setminus \{ 0 \}$.
In particular, for each $k \in \{ 1, 2, \ldots, n \}$, the
component mapping $F_{k,a,n} : V \to Y$ is uniquely determined
by (\ref{Enan:8}) for all $x \in V \setminus \{ 0 \}$, with
$F_{k,a,n}(0) = 0$.
Since $f(0) = 0$ implies $f_{k,a,n}(0) = 0$ for all
$k \in \{ 1, 2, \ldots, n \}$, the specification (\ref{Enan:8})
naturally extends to the entire space $V$, thereby ensuring
that $F(0) = 0$.

From (\ref{Enan:8}) and the definition of $f_{k,a,n}(x)$, we
observe that the following relations
\begin{align*}
& \| E_n F_{k,a,n}(x_1, x_2, \ldots, x_{n+1}) \| \\
& \hspace*{3mm}
  =    \lim_{i \to \infty} \frac{1}{a^{ki}}
       \big\|
       E_n f_{k,a,n} \big( a^i x_1, \ldots, a^i x_{n+1} \big)
       \big\| \\
& \hspace*{3mm}
  =    \lim_{i \to \infty} \frac{| \omega_{k,a,n} |}{a^{ki}}
       \Bigg\|
       E_n f
       \big( a^{i+n-1} x_1, \ldots, a^{i+n-1} x_{n+1} \big) \\
& \hspace*{8mm}
       +\sum_{m=1}^{n-1} (-1)^m
       \sum_{\substack{1 \leq i_1 < \cdots < i_m \leq n \\
                       i_1, \ldots, i_m \neq k}}
       a^{i_1 + \cdots + i_m}
       E_n f
       \big( a^{i+n-1-m} x_1, \ldots, a^{i+n-1-m} x_{n+1} \big)
       \Bigg\| \\
& \hspace*{3mm}
  \leq \lim_{i \to \infty} \frac{| \omega_{k,a,n} |}{a^{ki}}
       \Bigg(
       \phi \big(
            a^{i+n-1} x_1, \ldots, a^{i+n-1} x_{n+1}
            \big) \\
& \hspace*{8mm}
       +\sum_{m=1}^{n-1}
       \sum_{\substack{1 \leq i_1 < \cdots < i_m \leq n \\
                       i_1, \ldots, i_m \neq k}}
       a^{i_1 + \cdots + i_m}
       \phi
       \big( a^{i+n-1-m} x_1, \ldots, a^{i+n-1-m} x_{n+1} \big)
       \Bigg) \\
& \hspace*{3mm}
  =    0
\end{align*}
hold for all $x_1, \ldots, x_{n+1} \in V \setminus \{ 0 \}$
when $k > \ell$.
Similarly, for the case $k \leq \ell$, the relations
\begin{align*}
& \| E_n F_{k,a,n}(x_1, x_2, \ldots, x_{n+1}) \| \\
& \hspace*{3mm}
  =    \lim_{i \to \infty} a^{ki}
       \bigg\| E_n f_{k,a,n}
               \bigg(
               \frac{1}{a^i} x_1, \ldots, \frac{1}{a^i} x_{n+1}
               \bigg)
       \bigg\| \\
& \hspace*{3mm}
  =    \lim_{i \to \infty} | \omega_{k,a,n} | a^{ki}
       \Bigg\|
       E_n f \bigg(
             \frac{1}{a^i} x_1, \ldots, \frac{1}{a^i} x_{n+1}
             \bigg) \\
& \hspace*{8mm}
       +\sum_{m=1}^{n-1} (-1)^m
       \sum_{\substack{1 \leq i_1 < \cdots < i_m \leq n \\
                       i_1, \ldots, i_m \neq k}}
       a^{i_1 + \cdots + i_m}
       E_n f \bigg(
             \frac{a^{n-1-m}}{a^i} x_1, \ldots,
             \frac{a^{n-1-m}}{a^i} x_{n+1}
             \bigg)
       \Bigg\| \\
& \hspace*{3mm}
  \leq | \omega_{k,a,n} |
       \lim_{i \to \infty} a^{ki}
       \Bigg(
       \phi \bigg(
            \frac{1}{a^i} x_1, \ldots, \frac{1}{a^i} x_{n+1}
            \bigg) \\
& \hspace*{8mm}
       +\sum_{m=1}^{n-1}
       \sum_{\substack{1 \leq i_1 < \cdots < i_m \leq n \\
                       i_1, \ldots, i_m \neq k}}
       a^{i_1 + \cdots + i_m}
       \phi \bigg( \frac{a^{n-1-m}}{a^i} x_1, \ldots,
                   \frac{a^{n-1-m}}{a^i} x_{n+1}
            \bigg)
       \Bigg) \\
& \hspace*{3mm}
  =    0
\end{align*}
hold for all
$x_1, x_2, \ldots, x_{n+1} \in V \setminus \{ 0 \}$.

Since $F(x) = \sum\limits_{k=1}^n F_{k,a,n}(x)$ for all
$x \in V$, it follows by linearity that
\begin{align*}
& E_n F(x_1, x_2, \ldots, x_{n+1}) \\
& \hspace*{3mm}
  = \sum_{k=1}^\ell E_n F_{k,a,n}(x_1, x_2, \ldots, x_{n+1}) +
    \sum_{k=\ell+1}^n E_n F_{k,a,n}(x_1, x_2, \ldots, x_{n+1})
    \\
& \hspace*{3mm}
  = 0
\end{align*}
for all $x_1, x_2, \ldots, x_{n+1} \in V \setminus \{ 0 \}$.
Combined with the boundary conditions, including the relation
$E_n F(0, x_2, \ldots, x_{n+1}) = 0$ which directly follows
from $F(0) = 0$, we conclude that the identity
$E_n F(x_1, x_2, \ldots, x_{n+1}) = 0$ holds for all
$x_1, x_2, \ldots, x_{n+1} \in V$.
This completes the proof.
\hfill$\Box$
\vspace*{5mm}

The following theorem presents a specialized application of
Theorem \ref{Enan:mthm} for the concrete case where $a = 3$
and $n = 3$.


\begin{thm} \label{Enan:thm11}
Let $\ell$ be a fixed integer such that $0 \leq \ell \leq 3$.
If a function $\phi : (V \setminus \{ 0 \})^4 \to [0, \infty)$
satisfies the convergence condition
\begin{align*}
\left\{
\begin{array}{ll}
{\displaystyle
\sum_{i=0}^\infty \frac{1}{3^i}
\phi \big( 3^i x_1, 3^i x_2, 3^i x_3, 3^i x_4 \big) < \infty}
& (\mbox{for}~ \ell = 0), \vspace*{2mm}\\
{\displaystyle
\sum_{i=0}^\infty \frac{1}{3^{(\ell+1) i}}
\phi \big( 3^i x_1, 3^i x_2, 3^i x_3, 3^i x_4 \big)}
& (\mbox{for}~ \ell \in \{ 1, 2 \}), \\
{\displaystyle \hspace*{5mm}
+\sum_{i=0}^\infty 3^{\ell i}
\phi \bigg( \frac{1}{3^i} x_1, \frac{1}{3^i} x_2,
            \frac{1}{3^i} x_3, \frac{1}{3^i} x_4
     \bigg) < \infty} \vspace*{2mm}\\
{\displaystyle
\sum_{i=0}^\infty 3^{3i}
\phi \bigg( \frac{1}{3^i} x_1, \frac{1}{3^i} x_2,
            \frac{1}{3^i} x_3, \frac{1}{3^i} x_4
     \bigg) < \infty}
& (\mbox{for}~ \ell = 3)
\end{array}
\right.
\end{align*}
for all $x_1, x_2, x_3, x_4 \in V \setminus \{ 0 \}$, and a
mapping $f : V \to Y$ satisfies the inequality
\begin{align*}
\| E_3 f(x_1, x_2, x_3, x_4) \| \leq \phi(x_1, x_2, x_3, x_4)
\end{align*}
for all $x_1, x_2, x_3, x_4 \in V \setminus \{ 0 \}$ with
$f(0) = 0$, then there exists a unique mapping $F : V \to Y$
satisfying $E_3 F(x_1, x_2, x_3, x_4) = 0$ for all
$x_1, x_2, x_3, x_4 \in V$ and the inequality
\begin{align*}
\| f(x) - F(x) \| \leq \left\{
\begin{array}{ll}
{\displaystyle
\frac{1}{144} \sum_{i=0}^\infty \frac{\varphi(3^i x)}{3^{i+1}}}
& (\mbox{for}~ \ell = 0), \vspace*{2mm}\\
{\displaystyle
\frac{1}{144} \sum_{i=0}^\infty 3^i
\varphi \bigg( \frac{1}{3^{i+1}} x \bigg) +
\frac{1}{108} \sum_{i=0}^\infty
\frac{\varphi(3^i x)}{3^{2(i+1)}}}
& (\mbox{for}~ \ell = 1), \vspace*{2mm}\\
{\displaystyle
\frac{1}{108} \sum_{i=0}^\infty 3^{2i}
\varphi \bigg( \frac{1}{3^{i+1}} x \bigg) +
\frac{1}{432} \sum_{i=0}^\infty
\frac{\varphi(3^i x)}{3^{3(i+1)}}}
& (\mbox{for}~ \ell = 2), \vspace*{2mm}\\
{\displaystyle
\frac{1}{432} \sum_{i=2}^\infty 3^{3i}
\varphi \bigg( \frac{1}{3^{i+1}} x \bigg)}
& (\mbox{for}~ \ell = 3)
\end{array}
\right.
\end{align*}
for all $x \in V \setminus \{ 0 \}$, where the function
$\varphi : V \setminus \{ 0 \} \to [0,\infty)$ is defined by
\begin{align*}
\varphi(x) := \sum_{k=1}^{24} | c_k | \phi(x,x,x,kx)
\end{align*}
for all $x \in V \setminus \{ 0 \}$, and $c_k$ is the integer
specified in Example \ref{Enan:ex9}.
Furthermore, the unique mapping $F$ can be decomposed into
three component monomial mappings,
$F(x) = \sum\limits_{k=1}^{3} F_{k,3,3}(x)$ for all $x \in V$,
where each component $F_{k,3,3} : V \to Y$ is explicitly
defined by
\begin{align*}
F_{1,3,3}(x)
& = \left\{
    \begin{array}{ll}
    {\displaystyle
    \lim_{i \to \infty}
    \frac{1}{3^i} f_{1,3,3} \big( 3^i x \big)}
    & (\mbox{for}~ \ell = 0), \vspace*{2mm}\\
    {\displaystyle
    \lim_{i \to \infty}
    3^i f_{1,3,3} \bigg( \frac{1}{3^i} x \bigg)}
    & (\mbox{for}~ \ell \in \{ 1, 2, 3 \}),
    \end{array}
    \right. \vspace*{2mm}\\
F_{2,3,3}(x)
& = \left\{
    \begin{array}{ll}
    {\displaystyle
    \lim_{i \to \infty}
    \frac{1}{3^{2i}} f_{2,3,3} \big( 3^i x \big)}
    & (\mbox{for}~ \ell \in \{ 0, 1 \}), \vspace*{2mm}\\
    {\displaystyle
    \lim_{i \to \infty}
    3^{2i} f_{2,3,3} \bigg( \frac{1}{3^i} x \bigg)}
    & (\mbox{for}~ \ell \in \{ 2, 3 \}),
    \end{array}
    \right. \vspace*{2mm}\\
F_{3,3,3}(x)
& = \left\{
    \begin{array}{ll}
    {\displaystyle
    \lim_{i \to \infty}
    \frac{1}{3^{3i}} f_{3,3,3} \big( 3^i x \big)}
    & (\mbox{for}~ \ell \in \{ 0, 1, 2 \}), \vspace*{2mm}\\
    {\displaystyle
    \lim_{i \to \infty}
    3^{3i} f_{3,3,3} \bigg( \frac{1}{3^i} x \bigg)}
    & (\mbox{for}~ \ell = 3)
    \end{array}
    \right.
\end{align*}
with $F_{k,3,3}(0) = 0$.
\end{thm}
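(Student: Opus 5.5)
Theorem \ref{Enan:thm11} is the special case $a=3$, $n=3$ of Theorem \ref{Enan:mthm}, so the plan is to specialize that theorem and then compute the constants explicitly. First, by Example \ref{Enan:ex9}, the identity (\ref{Enan:10}) takes the concrete form $\Gamma_{3,3} f(x) = \sum_{k=1}^{24} c_k E_3 f(x,x,x,kx)$, with $m = 3^3-3 = 24$, $b_k = c_k$ and $(a_{k,1},\dots,a_{k,4}) = (1,1,1,k)$. Hence the function $\varphi$ defined in Theorem \ref{Enan:mthm} is exactly $\varphi(x) = \sum_{k=1}^{24}|c_k|\,\phi(x,x,x,kx)$. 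The convergence conditions of Theorem \ref{Enan:thm11} are literally (\ref{Enan:13}) with $a=3$, $n=3$. Therefore Theorem \ref{Enan:mthm} gives a unique $F$ with $E_3F=0$ on $V^4$, satisfying (\ref{Enan:9}), with components given by (\ref{Enan:8}).

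It remains to evaluate the weights $\omega_{k,3,3}$ from Definition \ref{Enan:def1}:
\begin{align*}
\omega_{1,3,3} &= \frac{1}{(3-9)(3-27)} = \frac{1}{144}, \\
\omega_{2,3,3} &= \frac{1}{(9-3)(9-27)} = -\frac{1}{108}, \\
\omega_{3,3,3} &= \frac{1}{(27-3)(27-9)} = \frac{1}{432}.
\end{align*}
We then substitute into (\ref{Enan:9}) case by case, using the convention $\omega_{0,3,3}=0$.
\begin{itemize}
\item For $\ell=0$ the first sum vanishes, and the second is $\frac{1}{144}\sum_{i\ge 0}\varphi(3^ix)/3^{i+1}$.
\item For $\ell=1$ we get the weights $\frac{1}{144}$ and $\frac{1}{108}$, with powers $3^{i}$ and $3^{-2(i+1)}$.
\item For $\ell=2$ we get the weights $\frac{1}{108}$ and $\frac{1}{432}$, with powers $3^{2i}$ and $3^{-3(i+1)}$.
\item For $\ell=3$ we get $\frac{1}{432}\sum_{i\ge2}3^{3i}\varphi(x/3^{i+1})$, since $n-1=2$.
\end{itemize}
These match the stated bounds.

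Finally, the component formulas follow by reading (\ref{Enan:8}) with $k\in\{1,2,3\}$. We use $F_{k}=\lim 3^{-ki}f_{k,3,3}(3^ix)$ when $k>\ell$, and $F_k=\lim 3^{ki}f_{k,3,3}(3^{-i}x)$ when $k\le\ell$. This produces exactly the listed case splits: $F_1$ uses scaling up only for $\ell=0$, $F_2$ for $\ell\in\{0,1\}$, and $F_3$ for $\ell\in\{0,1,2\}$. The condition $F_{k,3,3}(0)=0$ comes from $f(0)=0$, as in the proof of Theorem \ref{Enan:mthm}.

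The main obstacle is not the logic but verifying the numerical identity of Example \ref{Enan:ex9}, namely that the $c_k$ listed actually realize (\ref{Enan:10}). If this needs independent checking, I would use the construction in the proof of Lemma \ref{Enan:lem8}. Expand $\Gamma_{3,3}f(x) = f(27x) - 39f(9x) + 351f(3x) - 729f(x)$, whose coefficients are the elementary symmetric polynomials of $3,9,27$. Then peel off the terms $f(kx)$ for $k=27,\dots,4$ by successive choices of $c_{24},\dots,c_1$. The vanishing of the residual coefficients on $f(x),f(2x),f(3x)$ is guaranteed by the Vandermonde argument of that lemma, so only the peeling recursion needs checking.
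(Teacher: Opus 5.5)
Your proposal is correct and follows the paper's route exactly: Theorem \ref{Enan:thm11} is obtained by specializing Theorem \ref{Enan:mthm} to $a=n=3$. You use the representation of Example \ref{Enan:ex9} ($m=24$, $b_k=c_k$, arguments $(x,x,x,kx)$), insert $|\omega_{1,3,3}|=\frac{1}{144}$, $|\omega_{2,3,3}|=\frac{1}{108}$, $|\omega_{3,3,3}|=\frac{1}{432}$ into (\ref{Enan:9}), and read the components off (\ref{Enan:8}). Your optional peeling check of the $c_k$ also works out, since $\sum_k c_k=416$ makes the residual coefficients of $f(3x)$, $f(2x)$, $f(x)$ equal to $351$, $0$, $-729$, as required.
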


As a corollary to Theorem \ref{Enan:mthm}, one can easily
establish the Hyers-Ulam-Rassias stability of the functional
equation $E_n f(x_1, x_2, \ldots, x_{n+1}) = 0$ for all
$x_1, x_2, \ldots, x_{n+1} \in X \setminus \{ 0 \}$.


\begin{cor} \label{Enan:cor12}
Let $\theta$ be a positive real constant and let $p$ be a real
number such that $p \not\in \{ 1, 2, \ldots, n \}$.
If a mapping $f : X \to Y$ satisfies the inequality
\begin{align*}
\| E_n f(x_1, x_2, \ldots, x_{n+1}) \|
\leq \theta
     \big(
     \| x_1 \|^p + \| x_2 \|^p + \cdots + \| x_{n+1} \|^p
     \big)
\end{align*}
for all $x_1, x_2, \ldots, x_{n+1} \in X \setminus \{ 0 \}$
with $f(0) = 0$, then there exists a unique mapping
$F : X \to Y$ satisfying $E_n F(x_1, x_2, \ldots, x_{n+1}) = 0$
for all $x_1, x_2, \ldots, x_{n+1} \in X$ and
\begin{align*}
\| f(x) - F (x)\| \leq \left\{
\begin{array}{ll}
{\displaystyle
\frac{| \omega_{1,a,n} |}{a - a^p} M \theta \| x \|^p}
& (\mbox{for}~ p < 1), \vspace*{2mm}\\
{\displaystyle
\bigg( \frac{| \omega_{\ell,a,n} |}{a^p - a^\ell} +
       \frac{| \omega_{\ell+1,a,n} |}{a^{\ell+1} - a^p}
\bigg) M \theta \| x \|^p}
& (\mbox{for}~ \ell < p < \ell + 1), \vspace*{2mm}\\
{\displaystyle
\frac{a^{n(n-1)} | \omega_{n,a,n} |}{a^{(n-1) p} (a^p - a^n)}
M \theta \| x \|^p}
& (\mbox{for}~ p > n)
\end{array}
\right.
\end{align*}
for all $x \in X \setminus \{ 0 \}$ and some
$\ell \in \{ 1, 2, \ldots, n-1 \}$, where
\begin{align*}
M := \sum_{i=1}^m | b_i | \sum_{j=1}^{n+1} | a_{i,j} |^p,
\end{align*}
with $m = a^n - n$, and $b_i, a_{i,j}$ are the nonzero
integers given in $(\ref{Enan:10})$.
\end{cor}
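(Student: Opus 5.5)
We plan to apply Theorem \ref{Enan:mthm} with the control function
\begin{align*}
\phi(x_1, \ldots, x_{n+1}) := \theta \big( \| x_1 \|^p + \cdots + \| x_{n+1} \|^p \big),
\end{align*}
and then to evaluate the bound (\ref{Enan:9}) explicitly. The first step is to identify the auxiliary function $\varphi$ from Theorem \ref{Enan:mthm}. Since $\phi$ is homogeneous of degree $p$, we get
\begin{align*}
\varphi(x) = \sum_{i=1}^m |b_i| \, \theta \sum_{j=1}^{n+1} |a_{i,j}|^p \|x\|^p = M\theta\|x\|^p .
\end{align*}
We also have $\phi(a^i x_1, \ldots, a^i x_{n+1}) = a^{ip}\phi(x_1, \ldots, x_{n+1})$, and the analogous identity holds for the scaling $1/a^i$.

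Next we choose $\ell$ according to the position of $p$.
\begin{itemize}
\item For $p<1$ we take $\ell=0$. The series in (\ref{Enan:13}) is then geometric with ratio $a^{p-1}<1$.
\item For $\ell<p<\ell+1$ with $1\le \ell\le n-1$, we take that $\ell$. The two series are geometric with ratios $a^{p-\ell-1}<1$ and $a^{\ell-p}<1$.
\item For $p>n$ we take $\ell=n$. The series has ratio $a^{n-p}<1$.
\end{itemize}
In every case (\ref{Enan:13}) holds for nonzero arguments. Theorem \ref{Enan:mthm} then yields a unique $F$ with $E_nF=0$ satisfying (\ref{Enan:9}). Uniqueness is inherited from that theorem, since the class of $\phi$ used there includes ours.

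The remaining work is summing the geometric series in (\ref{Enan:9}) with $\varphi(x)=M\theta\|x\|^p$. We expect this bookkeeping to be the only real obstacle.

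\textbf{Case $\ell=0$.} Here $\omega_{0,a,n}=0$, so only the second term survives:
\begin{align*}
|\omega_{1,a,n}|\sum_{i\ge0} a^{ip}a^{-(i+1)} = \frac{|\omega_{1,a,n}|}{a-a^p}.
\end{align*}

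\textbf{Case $0<\ell<n$.} The first term is
\begin{align*}
|\omega_{\ell,a,n}|\sum_{i\ge0} a^{\ell i}a^{-(i+1)p} = \frac{|\omega_{\ell,a,n}|}{a^p-a^\ell},
\end{align*}
and the second is
\begin{align*}
|\omega_{\ell+1,a,n}|\sum_{i\ge0} a^{ip}a^{-(\ell+1)(i+1)} = \frac{|\omega_{\ell+1,a,n}|}{a^{\ell+1}-a^p}.
\end{align*}

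\textbf{Case $\ell=n$.} We compute
\begin{align*}
|\omega_{n,a,n}|\sum_{i\ge n-1} a^{ni}a^{-(i+1)p} = |\omega_{n,a,n}|\frac{a^{n(n-1)}a^{-np}}{1-a^{n-p}} = \frac{a^{n(n-1)}|\omega_{n,a,n}|}{a^{(n-1)p}(a^p-a^n)}.
\end{align*}
Each case matches the stated constants once multiplied by $M\theta\|x\|^p$, which completes the plan.
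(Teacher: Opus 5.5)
Your proposal is correct and follows the paper's proof essentially verbatim. You substitute $\phi=\theta(\|x_1\|^p+\cdots+\|x_{n+1}\|^p)$ into Theorem~\ref{Enan:mthm}, obtain $\varphi(x)=M\theta\|x\|^p$, choose $\ell=0$, $\ell=\lfloor p\rfloor$, or $\ell=n$ according to where $p$ lies, and read the bounds off (\ref{Enan:9}); beyond the paper, you also carry out the geometric-series summations it leaves implicit, and your constants (including the $i\ge n-1$ starting index when $p>n$) are right.
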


\noindent
\textbf{Proof.}
By setting
$\phi(x_1, x_2, \ldots, x_{n+1})
 = \theta (\| x_1 \|^p + \| x_2 \|^p + \cdots + \| x_{n+1} \|^p)$
in Theorem \ref{Enan:mthm}, we directly obtain
$\varphi(x) = M \theta \| x \|^p$.
Furthermore, we observe that $\phi$ satisfies the convergence
condition (\ref{Enan:13}) for each corresponding value of
$\ell$ determined by the range of $p$: $\ell = 0$ for $p < 1$;
$\ell = \lfloor p \rfloor$ for $1 < p < n$ with
$p \not\in \{ 1, 2, \ldots, n \}$ (where $\lfloor p \rfloor$
denotes the greatest integer less than or equal to $p$); and
$\ell = n$ for $p > n$.
Consequently, applying inequality (\ref{Enan:9}) of Theorem
\ref{Enan:thm7} to each specified case yields the desired
error estimates.
\hfill$\Box$
\vspace{5mm}

The following example illustrates a concrete application of
Corollary \ref{Enan:cor12} for $a = 3$ and $n = 3$:


\begin{exam}
Let $\theta$ be a positive real constant and let $p$ be a real
number such that $p \not\in \{ 1, 2, 3 \}$.
If a mapping $f : X \to Y$ satisfies the inequality
\begin{align*}
\| E_3 f(x_1, x_2, x_3, x_4) \|
\leq \theta
     \big(
     \| x_1 \|^p + \| x_2 \|^p + \| x_3 \|^p + \| x_4 \|^p
     \big)
\end{align*}
for all $x_1, x_2, x_3, x_4 \in X \setminus \{ 0 \}$ with
$f(0) = 0$, then there exists a unique mapping $F : X \to Y$
satisfying $E_3 F(x_1, x_2, x_3, x_4) = 0$ for all
$x_1, x_2, x_3, x_4 \in X$ and the explicit approximation
inequality
\begin{align*}
\| f(x) - F(x) \| \leq \left\{
\begin{array}{ll}
{\displaystyle
\frac{1}{144 (3 - 3^p)} M_p \theta \| x \|^p}
& (\mbox{for}~ p < 1), \vspace*{2mm}\\
{\displaystyle
\bigg( \frac{1}{144 (3^p - 3)} + \frac{1}{108 (9 - 3^p)} \bigg)
M_p \theta \| x \|^p}
& (\mbox{for}~ 1 < p < 2), \vspace*{2mm}\\
{\displaystyle
\bigg( \frac{1}{108 (3^p - 9)} + \frac{1}{432 (27 - 3^p)} \bigg)
M_p \theta \| x \|^p}
& (\mbox{for}~ 2 < p < 3), \vspace*{2mm}\\
{\displaystyle
\frac{729}{432 \cdot 3^{2p} (3^p - 27)} M_p \theta \| x \|^p}
& (\mbox{for}~ p > 3)
\end{array}
\right.
\end{align*}
for all $x \in X \setminus \{ 0 \}$, where the constant
function $M_p$ as a function of the power $p$ is given by
\begin{align*}
M_p & = \sum_{k=1}^{24} | c_k | \big( 3 + k^p \big) \\
    & = 1 \cdot 24^p + 3 \cdot 23^p + 6 \cdot 22^p +
        10 \cdot 21^p + 15 \cdot 20^p + 21 \cdot 19^p \\
    & \hspace*{5mm}
        +28 \cdot 18^p + 36 \cdot 17^p + 45 \cdot 16^p +
        55 \cdot 15^p + 66 \cdot 14^p + 78 \cdot 13^p \\
    & \hspace*{5mm}
        +91 \cdot 12^p + 105 \cdot 11^p + 120 \cdot 10^p +
        136 \cdot 9^p + 153 \cdot 8^p + 171 \cdot 7^p \\
    & \hspace*{5mm}
        +151 \cdot 6^p + 93 \cdot 5^p + 3 \cdot 4^p +
        137 \cdot 3^p + 309 \cdot 2^p + 519 \cdot 1^p \\
    & \hspace*{5mm}
        +7056
\end{align*}
and $c_k$ is the integer specified in Example \ref{Enan:ex9}.
\end{exam}
\vspace*{5mm}

In particular, the case $p < 0$ in Corollary \ref{Enan:cor12}
yields the hyperstability of the functional equation
$E_n f(x_1, x_2, \ldots, x_{n+1}) = 0$.


\begin{thm} \label{Enan:thm14}
Let $\theta$ be a positive real constant and let $p$ be a
negative real number.
If a mapping $f : X \to Y$ satisfies the inequality
\begin{align*}
\| E_n f(x_1, x_2, \ldots, x_{n+1}) \|
\leq \theta
     \big(
     \| x_1 \|^p + \| x_2 \|^p + \cdots + \| x_{n+1} \|^p
     \big)
\end{align*}
for all $x_1, x_2, \ldots, x_{n+1} \in X \setminus \{ 0 \}$
with $f(0) = 0$, then $E_n f(x_1, x_2, \ldots, x_{n+1}) = 0$
for all $x_1, x_2, \ldots, x_{n+1} \in X$.
\end{thm}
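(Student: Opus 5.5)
Since $p<0$ lies outside $\{1,\dots,n\}$, the case $p<1$ (that is, $\ell=0$) of Corollary \ref{Enan:cor12} applies. It gives a mapping $F:X\to Y$ with $E_nF\equiv 0$ on $X^{n+1}$ and $F(0)=0$. It also gives the bound
\begin{align*}
\|f(x)-F(x)\|\le \frac{|\omega_{1,a,n}|}{a-a^p}\,M\theta\|x\|^p
\end{align*}
for all $x\in X\setminus\{0\}$. The plan is to show $f=F$, after which $E_nf=E_nF=0$ everywhere, which is the claim.

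The key step is a rescaling of $\phi$. Fix $t\in(0,1]$ and put $\phi_t:=t\,\phi$, where $\phi(x_1,\dots,x_{n+1})=\theta\sum_j\|x_j\|^p$. This is not quite the hypothesis as stated, so instead I would use homogeneity in the small-scale direction. Since $p<0$, $\|x\|^p\to 0$ as $\|x\|\to\infty$. Use the $\ell=0$ formula for $F$ from (\ref{Enan:8}), namely $F_{k,a,n}(x)=\lim_{i\to\infty}a^{-ki}f_{k,a,n}(a^ix)$ for every $k$. Then compare $f$ and $F$ through the large-scale identity, directly at the level of $\Gamma_{a,n}$.

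For every $x\neq 0$, (\ref{Enan:15}) gives $\|\Gamma_{a,n}f(x)\|\le M\theta\|x\|^p$. Write $f(x)-F(x)=\sum_k\big(f_{k,a,n}(x)-F_{k,a,n}(x)\big)$ using Lemma \ref{Enan:lem4}. Each difference is a telescoping sum $\sum_{i\ge0}\big(a^{-ki}f_{k,a,n}(a^ix)-a^{-k(i+1)}f_{k,a,n}(a^{i+1}x)\big)$. Each term there is a multiple of $\omega_{k,a,n}a^{-k(i+1)}\Gamma_{a,n}f(a^ix)$ (this is exactly the computation behind Theorem \ref{Enan:thm6}). Hence
\begin{align*}
\|f(x)-F(x)\|\le C\sum_{i\ge0}\|\Gamma_{a,n}f(a^ix)\|
\end{align*}
for some constant $C$.

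Now the main point. Fix $x\ne0$ and $N\ge1$, and replace $x$ by $a^Nx$ in the tail. Use the identity $f(x)-F(x)=\sum_{i=0}^{N-1}(\cdots)+\big(f\text{-}F\text{ difference at } a^Nx\text{, rescaled by }a^{-kN}\big)$. I would rather argue the following: $g:=f-F$ satisfies $\Gamma_{a,n}g=\Gamma_{a,n}f$ and $E_ng=E_nf$. Moreover $\|g(y)\|\le K\|y\|^p$ for $y\ne0$. Apply the inequality of the statement to $g$ with arguments $(y_1,\dots,y_{n+1})$ and expand $E_ng(y_1,\dots,y_{n+1})$ as an alternating sum of values $g(\sum_{i\in I}y_i)$. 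Whenever all partial sums are nonzero, this is bounded by $K\sum_I\|\sum_{i\in I}y_i\|^p$.

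Replacing $y_j$ by $a^Ny_j$ and noting $E_ng(a^N\cdot)=E_nf(a^N\cdot)$ is not directly scale-covariant, so I instead use uniqueness. The mapping $F':=F$ also satisfies (\ref{Enan:9}) with $\varphi$ replaced by $s\varphi$ for any $s>0$. Indeed, $\|g(x)\|\le K\|x\|^p$ and $\|x\|^p=a^{-Np}\|a^{-N}x\|^p$ type manipulations let me absorb constants. Then Lemma \ref{Enan:lem3}, applied with $\phi=s\cdot M\theta\|\cdot\|^p$ and letting $s\to0$, forces $g\equiv0$.

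I expect the main obstacle to be making this limiting argument rigorous, i.e. producing arbitrarily small admissible error bounds. The cleanest route I would try first is direct. From $\|g(y)\|\le K\|y\|^p$ and $g=\sum_kG_k$ with $G_k(ay)=a^kG_k(y)$ (both $f$'s defect and $F$ vanish appropriately), get $\|g(a^Ny)\|\to0$ as $N\to\infty$ because $p<0$. Combining this with $g(a^Ny)=\sum_ka^{kN}G_k(y)$ and using Vandermonde-type independence of the sequences $(a^{kN})_N$, every $G_k(y)=0$. Hence $f=F$.
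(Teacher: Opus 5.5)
Your opening step matches the paper. Corollary \ref{Enan:cor12} (case $p<1$) gives $F$ with $E_nF\equiv0$ and $F(0)=0$. Then $g:=f-F$ satisfies $\|g(y)\|\le K\|y\|^p$ for $y\neq0$, and $E_ng=E_nf$.

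The gap is that you never actually prove $g\equiv0$. Your final ``direct route'' rests on $g=\sum_kG_k$ with $G_k(ay)=a^kG_k(y)$ exactly. That holds for the components of $F$, because $\Gamma_{a,n}F=0$. For the components of $f$, however,
\[
f_{k,a,n}(ay)-a^kf_{k,a,n}(y)=\omega_{k,a,n}\Gamma_{a,n}f(y),
\]
and you only know $\|\Gamma_{a,n}f(y)\|\le M\theta\|y\|^p$, not $\Gamma_{a,n}f=0$. So the parenthetical ``$f$'s defect vanishes'' assumes what has to be shown. If you keep the defect, the computation just returns your telescoping formula $G_k(y)=-\omega_{k,a,n}\sum_{j\ge0}a^{-k(j+1)}\Gamma_{a,n}f(a^jy)$, which has no reason to vanish.

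In fact no argument built only from the $\Gamma_{a,n}$-bound, the bound on $g$, and the homogeneity of the components of $F$ can succeed. Take $n=1$, $X=Y=\mathbb{R}$, and $f(x)=|x|^p$ with $f(0)=0$. Then $\Gamma_{a,1}f(x)=(a^p-a)|x|^p$, the $\ell=0$ limit is $F=0$, and $\|g(x)\|\le|x|^p$, yet $g=f\neq0$. This $f$ violates the $E_1$ hypothesis near $x_1+x_2=0$, which is exactly the kind of information your argument never uses.

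The ``uniqueness with $s\to0$'' route is circular for the same reason. Knowing that $F$ satisfies (\ref{Enan:9}) with $s\varphi$ for every $s>0$ is equivalent to $g=0$. Also, Lemma \ref{Enan:lem3} cannot be applied to $f$, which is not known to split into homogeneous parts.

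The missing idea is to evaluate the original $E_n$-inequality at a configuration that is \emph{not} a uniform rescaling. You correctly noticed that scaling all arguments by $a^N$ moves $x$ along with everything else. The paper uses $(x_1,\ldots,x_{n+1})=((1-m)x,mx,\ldots,mx)$ with $x\neq0$ and $m\ge2$. The only partial sum that stays fixed is $(1-m)x+mx=x$, which appears with coefficient $\pm n$. Every other partial sum, namely $(1+(j-1)m)x$ for $j\neq1$ and $jmx$ for $1\le j\le n$, has norm at least $(m-1)\|x\|$. Since $E_ng=E_nf$, this yields
\[
n\|g(x)\|\le\theta\big((m-1)^p+nm^p\big)\|x\|^p
+K\Big(\sum_{0\le j\le n,\,j\neq1}\binom{n}{j}|1+(j-1)m|^p+\sum_{j=1}^n\binom{n}{j}(jm)^p\Big)\|x\|^p .
\]
Letting $m\to\infty$ with $p<0$ gives $g(x)=0$. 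Together with $f(0)=0=F(0)$, this gives $f=F$, and hence $E_nf=E_nF=0$ on all of $X^{n+1}$.
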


\noindent
\textbf{Proof.}
In Corollary \ref{Enan:cor12}, the condition $p < 0$ implies
the case $p < 1$.
Thus, there exists a unique mapping $F : X \to Y$ satisfying
$F(0) = 0$, $E_n F(x_1, x_2, \ldots, x_{n+1}) = 0$, and
\begin{align*}
\| f(x) - F(x) \|
\leq \frac{| \omega_{1,a,n} |}{a - a^p} M \theta \| x \|^p
\end{align*}
for all $x \in X \setminus \{ 0 \}$.
Since $E_n F(x_1, x_2, \ldots, x_{n+1}) = 0$ for all
$x_1, x_2, \ldots, x_{n+1} \in X$, it follows from the
identity of $E_n f((1-m) x, m x, \ldots, m x)$ that
\begin{align*}
n (-1)^n (f(x) - F(x))
& = \sum_{\substack{i=0 \\ i \neq 1}}^n
    \binom{n}{i} (-1)^{i-n+1} (f - F)((1-m+i) x) \\
& \hspace*{5mm}
    -\sum_{i=1}^n \binom{n}{i} (-1)^{i-n+1} (f - F)(imx) \\
& \hspace*{5mm}
    -E_n f((1-m) x, m x, \ldots, m x)
\end{align*}
for all $x \in X \setminus \{ 0 \}$ and $m \in \mathbb{N}$
with $m \geq n + 2$.
By applying the triangle inequality, we obtain
\begin{align*}
n \| f(x) - F(x) \|
& \leq \sum_{\substack{i=0 \\ i \neq 1}}^n
       \binom{n}{i}  \| (f - F)((1-m+i) x) \| \\
& \hspace*{5mm}
       +\sum_{i=1}^n \binom{n}{i} \| (f - F)(imx) \| +
       \| E_n f((1-m) x, m x, \ldots, m x) \| \\
& \leq \frac{| \omega_{1,a,n} |}{a - a^p}
       \Bigg( \sum_{\substack{i=0 \\ i \neq 1}}^n
              \binom{n}{i} | 1-m+i |^p +
              \sum_{i=1}^n \binom{n}{i} | im |^p
       \Bigg)
       M \theta \| x \|^p \\
& \hspace*{5mm}
       +\theta \big( | 1-m |^p + n m^p \big) \| x \|^p
\end{align*}
for all $x \in X \setminus \{ 0 \}$.
Since $p < 0$, taking the limit as $m \to \infty$ on the
right-hand side yields
$\lim\limits_{m \to \infty} | 1-m+i |^p = 0$
($0 \leq i \leq n$, $i \neq 1$),
$\lim\limits_{m \to \infty} | im |^p = 0$, and
$\lim\limits_{m \to \infty} (| 1-m |^p + n m^p) = 0$.
Consequently, the right-hand side of the inequality approaches
$0$ as $m \to \infty$, which implies that $f(x) = F(x)$ for
all $x \in X \setminus \{ 0 \}$.

Combined with $f(0) = 0 = F(0)$, we obtain $f(x) = F(x)$ for
all $x \in X$.
Therefore, $f$ satisfies the functional equation
$E_n f(x_1, x_2, \ldots, x_{n+1}) = 0$ for all
$x_1, x_2, \ldots, x_{n+1} \in X$, which completes the proof.
\hfill $\Box$

\end{document}